\documentclass{amsart}
\usepackage{xypic}

\def\AA{\mathbf{A}}
\def\CC{\mathbf{C}}
\def\ZZ{\mathbf{Z}}
\def\GL{\operatorname{GL}}
\def\<{\langle}
\def\>{\rangle}
\def\Hilb{\operatorname{Hilb}}
\def\K{\mathcal{K}}

\newtheorem{theorem}{Theorem}[section]
\newtheorem*{theorem*}{Theorem}
\newtheorem{proposition}[theorem]{Proposition}
\newtheorem{lemma}[theorem]{Lemma}
\newtheorem{corollary}[theorem]{Corollary}
\theoremstyle{definition}

\newtheorem{remark}[theorem]{Remark}

\title{Equivariant Chow classes of matrix orbit closures}
\author{Andrew Berget} 
\address{Department of Mathematics\\
Western Washington Universty\\
Bellingham, WA}
\email{andrew.berget@wwu.edu}

\author{Alex Fink}
\address{School of Mathematical Sciences\\
  Queen Mary University of London\\
  United Kingdom}
\email{a.fink@qmul.ac.uk}

\date{\today}
\begin{document}
\maketitle

\begin{abstract}
  Let $G$ be the product $\GL_r(\CC) \times (\CC^\times)^n$. We show
  that the $G$-equivariant Chow class of a $G$ orbit closure in the
  space of $r$-by-$n$ matrices is determined by a matroid. To do this,
  we split the natural surjective map from the $G$ equvariant Chow
  ring of the space of matrices to the torus equivariant Chow ring of
  the Grassmannian. The splitting takes the class of a Schubert
  variety to the corresponding factorial Schur polynomial, and also
  has the property that the class of a subvariety of the Grassmannian
  is mapped to the class of the closure of those matrices whose row
  span is in the variety.
\end{abstract}

\section{Introduction}
The first goal of this paper is to prove that the Chow class of a
certain affine variety determined by a $r$-by-$n$ matrix is a function
of the matroid of that matrix.  Specifically, given an $r$-by-$n$
matrix $v$ with complex entries, we let $X_v^\circ$ denote the set of
those matrices that are projectively equivalent to $v$ in the sense
that they are of the form $g v t^{-1}$, where $g \in \GL_r(\CC)$, and
$t \in \GL_n(\CC)$ is a diagonal matrix. Let $G$ be the group
consisting of pairs of matrices $(g,t)$, which acts on the space
$\AA^{r \times n}$ of $r$-by-$n$ matrices via the rule $(g,t)v =
gvt^{-1}$. 
A matrix orbit closure $X_v$ is the Zariski closure of $X_v^\circ$ in
$\AA^{r \times n}$; it is the $G$ orbit closure of $v$. This variety
determines a class in the $G$ equivariant Chow ring of $\AA^{r \times
  n}$. Theorem~\ref{thm:matroid inv} states that this class depends
only on the matroid of $v$.

This matroid invariance is a consequence of two results. The first
result is the matroid invariance of the class of a torus orbit closure
in the torus equivariant $K$-theory of the Grassmannian $G(r,n)$. This
result was shown by Speyer \cite{speyer} and was used by Speyer and
the second author to find a purely algebro-geometric interpretation of
the Tutte polynomial \cite{finkspeyer}. The second result which our
matroid invariance relies on deals with the relationship between the
$G$ equivariant Chow ring of $\AA^{r \times n}$ and the torus
equivariant Chow ring of $G(r,n)$, which we now explain.

The geometry of a particular subvariety $Y$ of the Grassmannian
$G(r,n)$ (or more generally, a partial flag variety) is of interest.
To study it, one constructs a certain matrix analog of~$Y$, defined to
be the closure in $\AA^{r \times n}$ of $\pi^{-1}(Y)$ where $\pi$ is
the projection from the space full rank $r$-by-$n$ matrices to
$G(r,n)$. Let $X$ denote this matrix analog, which is a $\GL_r(\CC)$
invariant subvariety of $\AA^{r \times n}$. Sometimes $X$ can
be effectively studied using the techniques of combinatorial
commutative algebra, in the sense that its prime ideal is recognizable
and a Gr\"obner basis can be produced from it. If the original variety
$Y$ had the action of a subgroup of $\GL_n(\CC)$ (acting on the
Grassmannian in the usual way), then $X$ has the action of the product
of $\GL_r(\CC)$ and this group. Such analogs have been constructed for
Schubert varieties, first by Fulton \cite{fulton} and later by
Knutson, Miller \cite{km} and their collaborators (e.g.,
\cite{kms,kmy}). Knutson considered the matrix analog of Richardson
varieties in \cite{knutson}. Recently, Weyser and Yong have
constructed such analogs for symmetric pair orbit closures in flag
varieties \cite{wy}.

The matrix analog of a torus orbit closure in $G(r,n)$ is precisely a
variety of the form $X_v$, where $v$ has rank $r$. In this case, the
matrix analog appears to be, in some sense, more complicated than the
original variety. Set-theoretic equations are known for $X_v$, but
they are not known to generate its prime ideal.  It is natural to ask
if the apparent added complications are visible in various algebraic
invariants of $X_v$. This is how our second main result of the paper
arose. We will prove the following theorem.
\begin{theorem*}\label{thm:splitting}
  The natural surjective map of $\mathbf{Z}[t_1,\dots,t_n]$-modules,
  \[
  A^*_G(\AA^{r \times n}) \to A^*_T(G(r,n)),
  \]
  has a splitting $s$ that satisfies the following properties: For
  every closed, irreducible, $T$-invariant subvariety $Y \subset
  G(r,n)$,
  \begin{enumerate}
  \item[(i)]  $s([Y]_T) = [\overline{\pi^{-1}Y}]_G$,
  \item[(ii)] $s([Y]_T)$ is a $\ZZ[t_1,\dots,t_n]$-linear combination
    of classes of matrix Schubert varieties $[X_\lambda]_G$.
  \end{enumerate}
\end{theorem*}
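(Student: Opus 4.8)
The plan is to realize the map of the statement as restriction to the open locus of full-rank matrices, and to produce the splitting as a Gysin pushforward along a Kempf-style collapsing. First I would identify $A^*_G(\AA^{r\times n})$ with $A^*_G(\mathrm{pt})=\ZZ[t_1,\dots,t_n][x_1,\dots,x_r]^{S_r}$, the $x_i$ being the Chern roots of the standard $\GL_r$-representation, and observe that the map in the statement is the restriction $j^*$ to the open set $U\subset\AA^{r\times n}$ of rank-$r$ matrices. Since $\pi\colon U\to G(r,n)$ is a $\GL_r$-torsor which is $T$-equivariant, there is a canonical isomorphism $A^*_G(U)\cong A^*_T(G(r,n))$ under which $j^*[\overline{\pi^{-1}Y}]_G=[Y]_T$ for every closed irreducible $T$-invariant $Y$: indeed $\overline{\pi^{-1}Y}\cap U=\pi^{-1}Y$, and for a torsor the class of a preimage equals the class of the base cycle. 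Surjectivity of $j^*$ is the localization sequence for $U\subset\AA^{r\times n}$.

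For the splitting I would introduce the subbundle $E=\operatorname{Hom}(\CC^r,S)$ of the trivial bundle $G(r,n)\times\AA^{r\times n}$, where $S$ is the tautological subbundle; its total space is $\{(W,v):\text{the rows of }v\text{ lie in }W\}$, the projection $q$ to $\AA^{r\times n}$ is proper and $G$-equivariant and restricts to an isomorphism over $U$, while the projection $p$ to $G(r,n)$ exhibits $p^{-1}Y$ as the total space of $E|_Y$. Since $q$ maps $p^{-1}Y$ birationally onto $\overline{\pi^{-1}Y}$ we get $[\overline{\pi^{-1}Y}]_G=q_*[p^{-1}Y]_G$. The key computation is that $E$ is the zero locus of the tautological section of $\operatorname{Hom}(\CC^r,Q)$ with $Q=\CC^n/S$, a regular section of the expected codimension $r(n-r)$, so that $[E]=e$ in $A^*_G(G(r,n)\times\AA^{r\times n})\cong A^*_G(G(r,n))$, where $e=e(\operatorname{Hom}(\CC^r,Q))$ is the equivariant Euler class; intersecting with the pullback of $Y$ gives $[E|_Y]=[Y]_T\cdot e$. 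Pushing forward along $G(r,n)\times\AA^{r\times n}\to\AA^{r\times n}$, which under these identifications becomes the equivariant Gysin map $\rho_*\colon A^*_G(G(r,n))\to A^*_G(\mathrm{pt})$, yields
\[
[\overline{\pi^{-1}Y}]_G=\rho_*\bigl([Y]_T\cdot e\bigr).
\]

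I would then simply \emph{define} $s(\alpha)=\rho_*(\alpha\cdot e)$. By the projection formula and the $\ZZ[t_1,\dots,t_n]$-linearity of $\rho_*$ this is manifestly a map of $\ZZ[t_1,\dots,t_n]$-modules, and it is a splitting because $j^*s=\mathrm{id}$: by linearity it suffices to check this on the Schubert basis $[Y_\lambda]_T$, where $s([Y_\lambda]_T)=[\overline{\pi^{-1}Y_\lambda}]_G=[X_\lambda]_G$ restricts to $[Y_\lambda]_T$ by the first paragraph. Property (i) is exactly the displayed formula; property (ii) then follows by expanding $[Y]_T=\sum_\lambda c_\lambda(t)[Y_\lambda]_T$ and using linearity, so that $s([Y]_T)=\sum_\lambda c_\lambda(t)[X_\lambda]_G$. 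To match the description in the abstract I would finally identify $s([Y_\lambda]_T)=\rho_*([Y_\lambda]_T\cdot e)$ with the factorial Schur polynomial $s_\lambda(x\mid t)$, either by equivariant localization at the torus-fixed points of $G(r,n)$ or by a Gr\"obner degeneration of the matrix Schubert variety $X_\lambda$, whose class is the corresponding double Schubert polynomial.

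The step I expect to be the main obstacle is the collapsing computation: checking that $q$ is genuinely birational onto $\overline{\pi^{-1}Y}$ for every $T$-invariant $Y$ (so the pushforward of the fundamental class carries no excess), that the tautological section cutting out $E$ is regular so that $[E]=e$ with no correction term, and that the projection pushforward is correctly matched with $\rho_*$ under the homotopy-invariance isomorphisms. A secondary technical point is the explicit evaluation of $\rho_*([Y_\lambda]_T\cdot e)$ as $s_\lambda(x\mid t)$, which is where the actual combinatorics of the statement resides; the degree bookkeeping, namely $\operatorname{codim}\overline{\pi^{-1}Y}=\operatorname{codim}_{G(r,n)}Y$, is a useful consistency check throughout.
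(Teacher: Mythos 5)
Your proof is correct, but it takes a genuinely different route from the paper's. The paper defines $s$ by decreeing $s([\Omega_\lambda]_T)=[X_\lambda]_G$ and extending $\ZZ[t_1,\dots,t_n]$-linearly, so property (ii) is built in; the entire content is property (i), which is proved by a "soft" argument: the Feh\'er--Rim\'anyi degree bound (via projecting away the last row, Corollary~\ref{cor:width bound}) shows that $[\overline{\pi^{-1}Y}]_G$ has $u_r$-degree at most $n-r$ and hence already lies in the $\ZZ[t]$-span of the $[X_\lambda]_G$, after which restriction to the full-rank locus and the basis property of the $[\pi^{-1}\Omega_\lambda]_G$ force the coefficients to agree with those of $[Y]_T$. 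You instead construct $s$ explicitly as the Gysin integral $s(\alpha)=\rho_*(\alpha\cdot e(\operatorname{Hom}(\CC^r,\mathcal{Q})))$ coming from the Kempf collapsing of the bundle $\operatorname{Hom}(\CC^r,\mathcal S)$, so that property (i) is essentially immediate from birationality of $E|_Y\to\overline{\pi^{-1}Y}$ (which does hold for every closed irreducible $T$-invariant $Y$, since the full-rank locus is dense in each fiber of $E|_Y\to Y$ and a full-rank matrix determines its row span), and (ii) follows from (i) for Schubert varieties together with linearity. Your approach buys a closed formula for the splitting that is well suited to localization computations of the kind the paper carries out in Section~\ref{sec:uniform}, and it bypasses the Feh\'er--Rim\'anyi input entirely; the paper's approach buys the degree bound as a statement in its own right (it applies to any $Y$ that is the closure of its full-rank matrices, not only to pullbacks from the Grassmannian) and avoids the regular-section and excess-intersection bookkeeping you rightly flag as the delicate points of the collapsing argument. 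The one place to be slightly more careful in your write-up is the identity $[E|_Y]=p^*[Y]\cdot[E]$: the clean justification is $\iota_{E*}\iota_E^*p^*[Y]=p^*[Y]\cdot[E]$ by the projection formula, together with $\iota_E^*p^*[Y]=[E|_Y]$ because $p\circ\iota_E:E\to G(r,n)$ is flat; this avoids any transversality discussion.
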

The structure of our paper is as follows. In Section~\ref{sec:chow} we
provide the required background on equivariant Chow groups. In
Section~\ref{sec:splitting} we recall results of Feh\'er and Rim\'anyi
used to bound polynomial degrees in Chow classes, and use these
results to prove Theorem~\ref{thm:splitting}, which is the theorem
stated above. In Section~\ref{sec:matrix orbit closures} we use
Theorem~\ref{thm:splitting} to prove Theorem~\ref{thm:matroid inv} on
the matroid invariance of the class of $X_v$. Lastly, in
Section~\ref{sec:uniform} we use equivariant localization to compute
the Chow class of a sufficiently generic torus orbit closure in
$G(r,n)$, and use Theorem~\ref{thm:matroid inv} to compute the Chow
class of $X_v$ when $v$ has a uniform rank $r$ matroid.

\section{Equivariant Chow ring}
\subsection{Background on Chow groups and rings}\label{sec:chow}
A variety is a reduced and irreducible scheme over $\CC$ and a
subvariety is a closed subscheme which is a variety. Let $X$ be a
variety over $\CC$. Assume that $X$ has an action of a reductive
linear algebraic group $G$. Our main references for equivariant Chow
groups are \cite{brion,eg}.

Let $V$ be a representation of $G$ containing an open subvariety $U$
which is the total space of a principal $G$-bundle. Such a
representation always exists because $G$ is reductive. The product $X
\times U$ has a free $G$ action, so the quotient space $X \times_G U :=
(X \times U)/G$ is a variety. Assume the codimension of $U$ in $V$ is
larger than some integer~$k$. The $G$-equivariant Chow group of $X$ of degree $k$
is defined as
\[
A_k^G(X) := A_{k+\dim(V)-\dim(G)}( X \times_G U),
\]
and this is independent of the choice of $U$. Here $A_k(-)$ is the
usual Chow group of dimension $k$ cycles on $-$, modulo rational
equivalence.  If $Y \subset X$ is a $G$-invariant subvariety of
codimension $k$, then $Y$ defines a class $[Y]_G :=[Y \times_G U]$ in
$A^k_G(X)$.

For all integers $k$, there is an exact sequence of groups
\begin{equation}
  A_k^G(Y) \to A_k^G(X) \to A_k^G(X-Y) \to 0
\end{equation}
where the former map is pushforward and the latter is pullback.  In
general, any proper $G$-equivariant map $f: Y \to Z$ gives rise to a
pushforward map $A^G_k(Y) \to A^G_k(Z)$ and any flat $G$-equivariant
map $X \to Y$ gives rise to a pullback map $A^G_k(Y) \to A^G_k(X)$.

\begin{proposition}
  Let $Y \subset X$ be a closed, irreducible, $G$-invariant
  subvariety of dimension $d$. Then,
  \begin{enumerate}
  \item[(i)] $[Y]_G$ freely generates $\ker(A_d^G(X) \to A^G_d(X-Y))$,
  \item[(ii)] for all $j > d$, we have $\ker(A_j^G(X) \to A^G_j(X-Y)) = 0$.
  \end{enumerate}
\end{proposition}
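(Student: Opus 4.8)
The plan is to push everything down to ordinary Chow groups of a Borel mixing space, where both assertions become elementary facts about top-dimensional cycles. Fix $V$ and $U \subseteq V$ as in the definition, with $\operatorname{codim}(V - U)$ large, and set $\widetilde{X} := X \times_G U$, $\widetilde{Y} := Y \times_G U$, so that $\widetilde{X} - \widetilde{Y} = (X - Y) \times_G U$. Since $Y$ and $U$ are irreducible, $Y \times U$ is irreducible and hence so is its geometric quotient $\widetilde{Y}$; its dimension is $d' := d + \dim V - \dim G$, which is exactly the degree in which $[Y]_G = [\widetilde{Y}]$ lives. Under these identifications the exact sequence recalled above is the ordinary localization sequence $A_{k'}(\widetilde{Y}) \to A_{k'}(\widetilde{X}) \to A_{k'}(\widetilde{X} - \widetilde{Y}) \to 0$, with $k' = k + \dim V - \dim G$, the first map being proper pushforward along the closed immersion and the second flat pullback to the open complement. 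So it is enough to understand the image of $A_{k'}(\widetilde{Y}) \to A_{k'}(\widetilde{X})$ in the two relevant degrees.

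Part (ii) follows at once: for $j > d$ we have $j' > d' = \dim \widetilde{Y}$, and an irreducible variety carries no nonzero cycles above its own dimension, so $A_{j'}(\widetilde{Y}) = 0$. By exactness the kernel $\ker(A_j^G(X) \to A_j^G(X - Y))$, being the image of this group, vanishes.

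For part (i) I would use the computation of the top Chow group of $\widetilde{Y}$. The only $d'$-dimensional subvariety of $\widetilde{Y}$ is $\widetilde{Y}$ itself, and there can be no rational equivalences in this degree, since they would have to be supported on $(d'+1)$-dimensional subvarieties of $\widetilde{Y}$, of which there are none; hence $A_{d'}(\widetilde{Y}) = \ZZ \cdot [\widetilde{Y}]$. Proper pushforward sends this generator to the fundamental class of the subvariety $\widetilde{Y} \subseteq \widetilde{X}$, namely $[Y]_G$. By exactness $\ker(A_d^G(X) \to A_d^G(X - Y))$ is therefore the cyclic group generated by $[Y]_G$, and the entire content of (i) reduces to showing that this generator has infinite order, equivalently that the pushforward $A_d^G(Y) \to A_d^G(X)$ is injective.

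This injectivity is the one step that is not formal: the localization sequence is only right exact, and pushforward along a closed immersion can genuinely annihilate a fundamental class---already a hyperplane is rationally trivial in affine space. I expect it to be the main obstacle, and I would resolve it by equivariant localization rather than by the three-term sequence. Restrict $G$-classes to a maximal torus $T \subseteq G$ and then to a $T$-fixed point $y \in Y$; in the application $X = \AA^{r \times n}$, with $T$ the diagonal torus and $y$ the origin (which lies in every orbit closure), this lands in the polynomial ring $A_T^*(\{y\})$, and $[Y]_T$ restricts to the $T$-multidegree of $Y$ at $y$. This multidegree is a nonzero homogeneous polynomial for any nonempty invariant subvariety---for instance by Brion's equivariant-multiplicity formula, whose nonvanishing is read off from the tangent cone---so $[Y]_T$, and hence $[Y]_G$, is not torsion. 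This yields the freeness and completes (i).
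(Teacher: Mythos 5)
Your argument follows the same skeleton as the paper's one-line proof: both reduce to the right-exactness of the localization sequence together with the facts that $A_j^G(Y)=0$ for $j>d$ and that $A_d^G(Y)=\ZZ\cdot[Y]_G$, so parts (ii) and the ``generates'' half of (i) are handled identically. Where you genuinely diverge is in isolating the one non-formal point, which the paper's proof silently assumes: that the pushforward $A_d^G(Y)\to A_d^G(X)$ is injective, i.e.\ that $[Y]_G$ is non-torsion in $A_d^G(X)$, which is exactly what ``freely generates'' requires beyond ``generates.'' You are right that this is not automatic --- your hyperplane-in-$\AA^2$ example shows the statement literally fails for a trivial (or too small) group, where the class vanishes and the kernel is $0$ rather than a free $\ZZ$-module --- so the proposition as stated really does need some input from the group action. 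Your resolution, restricting to a maximal torus and reading off the nonvanishing of the multidegree at a fixed point (which holds here because the weights of $G$ on $\AA^{r\times n}$, namely $u_i-t_j$, lie in an open half-space, and because $A_{T'}^*(\AA^{r\times n})$ is a torsion-free polynomial ring into which $A_G^*$ injects), is valid in the only setting where the paper uses the proposition. What it buys is an actual proof of freeness there; what it costs is generality, since as you note the argument is tied to a linear action on affine space with a fixed point, and does not establish the proposition for arbitrary $X$ and $G$ --- nor could it, given the counterexample. It would be worth recording that either a hypothesis ensuring $[Y]_G\neq 0$ (e.g.\ the half-space condition on weights) should be added, or ``freely generates'' weakened to ``generates.''
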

\begin{proof}
  This follows because $A_j^G(Y) =0$ for $j > d$ and because
  $A_d^G(Y)$ is freely generated by $[Y]_G$.
\end{proof}

Assume $X$ is smooth. Write $A^k_G(X) = A_{\dim(X)-k}^G(X)$ and
define $$A^*_G(X) := \bigoplus_{k \geq 0} A^k_G(X).$$ Since $X$ is
smooth, this group can be endowed with the intersection product, for
which the element $[X]_G \in A^0_G(X)$ is a multiplicative
identity.  The group $A^*_G(X)$ becomes a graded commutative ring
called the $G$-equivariant Chow ring of $X$. This name reflects the
fact that $A^*_G(X)$ is the Chow ring of $X\times_G U$.

When the open complement $X-Y \subset X$ is smooth, one obtains a
surjective map of graded rings $A_G^*(X) \to A_G^*(X-Y)$.
\begin{corollary}
  Suppose that $Y \subset X$ is an irreducible $G$-invariant  subvariety of
  codimension $k$ with a smooth complement $X-Y$. Then the kernel of
  the pullback $A^*_G(X) \to A^*_G(X-Y)$ is a graded ideal satisfying:
  \begin{enumerate}
  \item[(i)] $[Y]_G$ freely generates $\ker(A^k_G(X) \to A^k_G(X-Y))$, and
  \item[(ii)]for all $j<k$, $\ker(A_G^j(X) \to A_G^j(X-Y)) = 0$.
  \end{enumerate}
\end{corollary}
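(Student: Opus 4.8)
The plan is to deduce the corollary from the preceding proposition by reindexing from homological to cohomological degree, together with one purely formal observation. That observation is that the kernel of a homomorphism of graded rings is a graded ideal: since the pullback $A^*_G(X) \to A^*_G(X-Y)$ is a map of graded rings (as noted immediately before the statement), its kernel is an ideal and decomposes as the direct sum of its graded pieces $\ker(A^j_G(X) \to A^j_G(X-Y))$. This reduces the corollary to identifying each graded piece of the kernel, which is exactly what parts (i) and (ii) assert.

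Next I would set up the dimension bookkeeping. Because $X$ is irreducible and $Y$ is a proper closed subvariety, the complement $X-Y$ is a dense open subset, so $\dim(X-Y) = \dim X$. Hence under the convention $A^j_G(-) = A_{\dim(-)-j}^G(-)$ the cohomological degree $j$ corresponds to the \emph{same} homological degree $\dim X - j$ on both $X$ and $X-Y$, and the cohomological pullback is, degree by degree, the homological pullback map $A_{\dim X-j}^G(X) \to A_{\dim X-j}^G(X-Y)$ appearing in the exact sequence. Writing $d = \dim Y = \dim X - k$ for the dimension of $Y$, the cohomological degree $k$ is exactly the homological degree $d$, and the class $[Y]_G \in A^k_G(X)$ is identified with the class $[Y]_G \in A_d^G(X)$.

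With this dictionary in place both parts fall out of the proposition. For (i), the piece $\ker(A^k_G(X)\to A^k_G(X-Y))$ equals $\ker(A_d^G(X) \to A_d^G(X-Y))$, which part (i) of the proposition says is freely generated by $[Y]_G$. For (ii), a cohomological degree $j < k$ corresponds to homological degree $\dim X - j > \dim X - k = d$, so part (ii) of the proposition gives $\ker(A_{\dim X-j}^G(X)\to A_{\dim X-j}^G(X-Y)) = 0$, i.e. $\ker(A^j_G(X)\to A^j_G(X-Y))=0$.

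The only point that needs care — and the one I would state explicitly rather than leave implicit — is the compatibility of the two gradings on $X$ and on $X-Y$, namely that the pullback preserves cohomological degree precisely because $X-Y$ inherits the dimension of $X$. Once this is observed there is no real obstacle: the corollary is a reindexed restatement of the proposition, combined with the formal fact that the kernel of a graded ring homomorphism is a graded ideal.
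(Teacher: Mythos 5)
Your argument is correct and is essentially the paper's own (the paper gives no explicit proof, treating the corollary as an immediate reindexing of the preceding proposition): the kernel of the graded ring map is a graded ideal, and since $\dim(X-Y)=\dim X$, cohomological degree $j$ on both sides corresponds to homological degree $\dim X - j$, so parts (i) and (ii) translate directly into parts (i) and (ii) of the proposition. Your explicit attention to the compatibility of gradings is exactly the one point worth making, and nothing further is needed.
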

\begin{remark}
  The ideal $\ker(A^*_G(X) \to A^*_G(X-Y))$ is not necessarily
  principal.
\end{remark}
\subsection{$K$-theory and Chow groups of affine spaces} We will
briefly need the torus equivariant $K$-theory of an affine space $\AA$
and its relation to the equivariant Chow ring.

Let $K_0^G(X)$ denote the Grothendieck group of $G$-equivariant
coherent sheaves on $X$. When $X$ is smooth, this is generated by the
classes of locally free sheaves, and this group becomes a ring with
product being the tensor product of locally free sheaves.

When $X = \AA$ is an affine space, then $K_0^G(\AA)$ is simply
$K_0^G(\rm pt)$ which is the representation ring of the group
$G$. The class of a representation corresponds to a trivial bundle
over $\AA$ with $G$ action determined by the representation. If $G$ is
a torus $(\CC^\times)^m$ then $K_0^G(\AA)$ is a Laurent polynomial ring in $m$
variables $\ZZ[t_1^{\pm 1},\dots,t_m^{\pm 1}]$.  Similarly, the equivariant Chow ring
of $\AA$ is $\ZZ[t_1,\dots,t_m]$. Here, a trivial line bundle twisted
by a character is mapped to its first equivariant Chern class.

If $Y \subset \AA$ is a subvariety of $\AA$ then we write $\K(Y)$ for
the class of the structure sheaf of $Y$ in $K_0^G(\AA)$. There is a
recipe to obtain $[Y]_G$ from $\K(Y)$ {\cite[Proposition~1.9]{kms}}.
\begin{proposition}[Knutson--Miller--Shimozono]\label{prop:kms}
  To obtain $[Y]_G$ from $\K(Y)$, first replace each $t_i$ with
  $1-t_i$ and expand the result as a formal power series in the
  $t_i$. Gather the monomials of lowest possible total degree, which
  will be the codimension of $Y$ in $\AA$. The result is $[Y]_G$.
\end{proposition}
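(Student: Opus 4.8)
The plan is to compute both $\K(Y)$ and $[Y]_G$ from a single $G$-equivariant free resolution of $\mathcal{O}_Y$ and to recognise the prescribed recipe as the cycle map from $K_0^G(\AA)$ to $A^*_G(\AA)$. First I would choose a finite resolution $F_\bullet \to \mathcal{O}_Y$ by $G$-equivariant free $\mathcal{O}_\AA$-modules; each $F_i$ is a direct sum of trivial line bundles twisted by characters, so $\K(Y) = \sum_i (-1)^i [F_i]$ is an explicit $\ZZ$-linear combination of Laurent monomials $t^\chi = \prod_j t_j^{\chi_j}$. The substitution $t_i \mapsto 1 - t_i$ extends to a ring homomorphism $\sigma \colon K_0^G(\AA) \to \ZZ[[t_1,\dots,t_m]]$, since each $1 - t_i$ is invertible as a power series; applying $\sigma$ to $\K(Y)$ and extracting the lowest-degree homogeneous part is the operation to be identified with $[Y]_G$.

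For a single character $\chi = (\chi_1,\dots,\chi_m)$ one computes $\sigma(t^\chi) = \prod_i (1 - t_i)^{\chi_i} = 1 - \ell_\chi + (\text{degree} \ge 2)$, where $\ell_\chi = \sum_i \chi_i t_i$ is exactly the first equivariant Chern class of the character $\chi$ under the identification of the excerpt; thus on generators $\sigma$ records the first Chern class in its linear term, with constant term $1$. The base case to verify is the coordinate subspace: when $Y = V(x_{j_1},\dots,x_{j_c})$ is a complete intersection cut out by coordinates, the Koszul complex gives $\K(Y) = \prod_l (1 - t^{\chi_{j_l}})$, and $\sigma$ of each factor has lowest term $\ell_{\chi_{j_l}}$, so the lowest-degree part of $\sigma(\K(Y))$ is $\prod_l \ell_{\chi_{j_l}}$, in degree $c$; this is precisely $[Y]_G$, the product of the weights of the normal directions.

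To pass from this base case to a general irreducible $Y$ of codimension $c$, I would argue that the lowest-degree part of $\sigma(\K(Y))$ lands in $\ker(A^*_G(\AA) \to A^*_G(\AA - Y))$ and then invoke the Corollary. Concretely, restricting $\mathcal{O}_Y$ to the open complement gives the zero sheaf, so in the localization sequence $K_0^G(Y) \to K_0^G(\AA) \to K_0^G(\AA - Y) \to 0$ the class $\K(Y)$ lies in the image of the pushforward from $Y$; the cycle map sends such a class into the image of $A^*_G(Y)$, hence into codimension $\ge c$. By the Corollary this kernel vanishes below degree $c$ and is freely generated by $[Y]_G$ in degree $c$, so the lowest-degree part of $\sigma(\K(Y))$ occurs in degree exactly $c$ and is an integer multiple $m_Y [Y]_G$. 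Finally $m_Y$ is the multiplicity of $\mathcal{O}_Y$ along $Y$, namely the length of the structure sheaf at the generic point of $Y$; since $Y$ is reduced this length is $1$, giving the lowest-degree part equal to $[Y]_G$.

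The step I expect to be the main obstacle is controlling the lowest-degree part under the reduction to the base case: the extraction of the leading term is not additive over direct sums, so I must verify that in the relevant codimension $c$ the leading terms genuinely add rather than cancel, equivalently that $\sigma(\K(Y))$ vanishes to order exactly $c$. Tracking this through the localization sequence and the cycle map, and confirming that the surviving term is $+[Y]_G$ with multiplicity one rather than carrying an extraneous sign from the passage between the multiplicative $K$-theoretic variable and the additive Chow variable, is the delicate part; the additivity of $\K(-)$ over exact sequences and the Koszul base computation are routine by comparison.
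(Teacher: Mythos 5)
Your overall architecture---a Koszul base case for coordinate subspaces, then a reduction of the general case via the localization sequence---is the right shape, but the pivotal step is asserted rather than proved, and it is exactly where the content of the proposition lives. You write that ``the cycle map sends such a class into the image of $A^*_G(Y)$, hence into codimension $\geq c$,'' but the only map you have actually constructed is the ring homomorphism $\sigma$ followed by extraction of the lowest-degree homogeneous part, and that composite is not additive (as you yourself observe at the end) and has not been shown to be compatible with pushforward or with the $K$-theoretic localization sequence. Without such a compatibility there is no reason the leading term of $\sigma(\K(Y))$ should be constrained by $Y$ at all: once $[\mathcal{O}_Y]$ is expanded as an alternating sum of equivariant free modules, all visible reference to $Y$ is gone, and the assertion that $\sigma(\K(Y))$ vanishes to order exactly $c$ with leading term a multiple of $[Y]_G$ is precisely what must be established. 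The two standard ways to supply the missing bridge are: (a) the equivariant Riemann--Roch transformation of Edidin and Graham, which is a genuine natural transformation covariant for proper pushforward and satisfies $\tau(\mathcal{O}_Y)=[Y]_G+(\text{terms of higher codimension})$; one must then additionally compare the substitution $t_i\mapsto 1-t_i$ with the exponential in the Chern character and check that the leading terms agree; or (b) the route of Knutson--Miller and of the cited source: show that both the geometric class and the substitute-and-truncate recipe are additive over top-dimensional components with multiplicity and invariant under flat (Gr\"obner) degeneration, degenerate to a monomial ideal, and reduce to unions of coordinate subspaces, where your Koszul computation applies. Your closing paragraph correctly names the non-additivity of leading-term extraction as the main obstacle, but the proposal does not overcome it; likewise, the identification of the coefficient $m_Y$ with the length at the generic point is a consequence of whichever of (a) or (b) one sets up, not an independent observation.

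For context, the paper itself offers no proof of this statement; it is quoted from Knutson--Miller--Shimozono, whose underlying argument is of type (b). The parts of your proposal that would survive into a complete proof are the Koszul computation for a coordinate subspace (modulo fixing the sign convention relating the character $\chi$ of a coordinate to the weight of its conormal direction) and the use of the Corollary to conclude that, once the degree-$c$ part is known to lie in $\ker(A^c_G(\AA)\to A^c_G(\AA-Y))$ with multiplicity one, it must equal $[Y]_G$.
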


\section{Splitting of the localization sequence}\label{sec:splitting}
We now specialize the set-up of Section~\ref{sec:chow} to our main
case of interest. Let $\AA^{r \times n}$, $r \leq n$, be the affine
space of $r$-by-$n$ matrices with entries in $\CC$. This has an action
by $G = \GL_r(\CC) \times T$, where $T = (\CC^\times)^n$ is the
algebraic $n$-torus acting by $(g,t) \cdot m = g m t^{-1}$, viewing $t
\in T$ as a diagonal matrix. For the remainder of our work $G$ will
denote this product of groups.

\subsection{Degree bound of Feh\'er and Rim\'anyi}
The equivariant Chow ring of $\AA^{r \times n}$ is the equivariant
Chow ring of a point, since $\AA^{r \times n}$ is a vector bundle over
a point. We can succinctly describe this object
\cite[Proposition~6]{eg}: It is the ring of Weyl group invariants of
the polynomial ring over the character lattice of a maximal torus of
$G$. Specifically,
\[
A^*_G(\AA^{r\times n}) =
\mathbf{Z}[ u_1,\dots,u_r,t_1,\dots,t_n]^{S_r},
\]
where the symmetric group $S_r$ acts by permuting the subscripts on
the $u$~variables. Here the $u$~variables represent the characters of
the diagonal torus in $\GL_r(\CC)$ and the $t$~variables represent the
characters of the torus $T$.

We consider the map $\AA^{r \times n} \to \AA^{(r-1) \times n}$ that
forgets the last row of a matrix. Given $Y \subset \AA^{r \times n}$
we let $Y_0$ denote the image of $Y$ under the above map, which is
itself closed and irreducible.

The following result relates the classes of $Y$ and $Y_0$ and is a
special case of a more general result due to Feh\'er and Rim\'anyi
{\cite[Theorem~2.1]{fr07}}.
\begin{theorem}\label{thm:fr07}
  Let $Y$ and $Y_0$ be as above, which we take to have codimension $c$
  and $c_0$, respectively. Write $[Y]_G \in A^{c}_G(\AA^{(r+1) \times
    n})$ uniquely as
  \[
  [Y]_G = \sum_{k} p_k(u_1,\dots,u_{r-1},t_1,\dots,t_n) \cdot
  u_r^{c-k},
  \]
  where $p_k$ is a homogeneous polynomial of degree $k$. Write $G_0 =
  \GL_{r-1}(\CC) \times T \subset G$.  Then for all $k \geq 0$, $p_k$
  is in the kernel of $A^*_{G_0}(\AA^{(r-1)\times n}) \to
  A^*_{G_0}(\AA^{(r-1)\times n} - Y_0)$. In particular, the degree of
  $u_r$ in $[Y]_G$ is at most $c-c_0$.
\end{theorem}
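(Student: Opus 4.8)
The plan is to realize the $u_r$-expansion of $[Y]_G$ as a genuine equivariant class for a parabolic subgroup, and then read off the vanishing from the fact that $Y$ is disjoint from the preimage of the open set $\AA^{(r-1)\times n} - Y_0$. Denote by $\phi\colon \AA^{r\times n}\to\AA^{(r-1)\times n}$ the map forgetting the last row. First I would change the acting group: let $P\subset\GL_r(\CC)$ be the parabolic of block-lower-triangular matrices with upper-left block $A\in\GL_{r-1}(\CC)$, lower-right entry $d\in\CC^\times$, and zero upper-right block. This is exactly the subgroup under which $\phi$ is equivariant, $P$ acting on the target through the projection $g\mapsto A$. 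Restricting equivariance along $P\times T\subset G$ gives
\[
A^*_G(\AA^{r\times n}) = \ZZ[u_1,\dots,u_r,t_1,\dots,t_n]^{S_r} \hookrightarrow \ZZ[u_1,\dots,u_r,t_1,\dots,t_n]^{S_{r-1}} = A^*_{P\times T}(\AA^{r\times n}),
\]
carrying $[Y]_G$ to $[Y]_{P\times T}$. In this larger ring of $S_{r-1}$-invariants the expansion $[Y]_{P\times T}=\sum_k p_k\,u_r^{c-k}$ is intrinsic, with $u_r$ the character of the $\CC^\times$ factor $d$ of the Levi and each $p_k\in\ZZ[u_1,\dots,u_{r-1},t_1,\dots,t_n]^{S_{r-1}}=A^*_{G_0}(\AA^{(r-1)\times n})$.

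Next I would identify the codomain. The map $\phi$ is a $(P\times T)$-equivariant affine-space bundle with fibre the last row $\AA^{1\times n}$, so by homotopy invariance $\phi^*$ is an isomorphism; since the factor $d$ and the unipotent radical of $P$ act trivially on $\AA^{(r-1)\times n}$, one obtains $A^*_{P\times T}(\AA^{r\times n})\cong A^*_{G_0}(\AA^{(r-1)\times n})[u_r]$ with $u_r$ pulled back from the classifying space, and this isomorphism sends $[Y]_{P\times T}$ to $\sum_k p_k\,u_r^{c-k}$ coefficientwise. The same holds over $W:=\AA^{(r-1)\times n}-Y_0$: restricting $\phi$ to $\phi^{-1}(W)\to W$ gives $A^*_{P\times T}(\phi^{-1}W)\cong A^*_{G_0}(W)[u_r]$, and the restriction map is the localization map on coefficients tensored with the identity on $u_r$.

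The geometric input is then immediate. Because $Y_0=\phi(Y)$, we have $Y\subseteq\phi^{-1}(Y_0)$, hence $Y\cap\phi^{-1}(W)=\emptyset$, so $[Y]_{P\times T}$ lies in the kernel of restriction to $\phi^{-1}(W)$ by the localization sequence. Transporting this through the identifications above yields $\sum_k \bar p_k\,u_r^{c-k}=0$ in $A^*_{G_0}(W)[u_r]$, where $\bar p_k$ is the image of $p_k$ under $A^*_{G_0}(\AA^{(r-1)\times n})\to A^*_{G_0}(W)$. Since the powers of $u_r$ are linearly independent over $A^*_{G_0}(W)$, each $\bar p_k=0$, that is, $p_k\in\ker\bigl(A^*_{G_0}(\AA^{(r-1)\times n})\to A^*_{G_0}(\AA^{(r-1)\times n}-Y_0)\bigr)$, which is the assertion. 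The final clause follows from the Corollary: that kernel vanishes in degrees below $c_0$, so $p_k=0$ for $k<c_0$ and the surviving powers $u_r^{c-k}$ have exponent $c-k\le c-c_0$.

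I expect the main obstacle to be the change-of-groups bookkeeping of the middle two paragraphs: verifying that restriction from $G$ to the parabolic $P\times T$ leaves the class unchanged while making the $u_r$-expansion meaningful, and that $\phi^*$ realizes $A^*_{P\times T}(\AA^{r\times n})$ as $A^*_{G_0}(\AA^{(r-1)\times n})[u_r]$ compatibly with restriction to $W$ and identically on $u_r$. Once these identifications are pinned down, the vanishing of $[Y]$ off $\phi^{-1}(Y_0)$ does all the work, and the statement is really a repackaging of the localization sequence.
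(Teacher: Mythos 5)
The paper does not actually prove this statement: it is quoted without proof as a special case of Feh\'er and Rim\'anyi's Theorem~2.1 in \cite{fr07}, so there is no in-paper argument to compare yours against. That said, your proof is correct and self-contained, and it runs along the same lines as the mechanism underlying the cited result. The chain of identifications all check out: restriction from $G$ to $P\times T$ sends $[Y]_G$ to $[Y]_{P\times T}$ and, on rings of a point, is the inclusion $\ZZ[u,t]^{S_r}\hookrightarrow\ZZ[u,t]^{S_{r-1}}$; killing the unipotent radical and applying the projective-bundle formula for the trivially acting $\CC^\times$-factor gives $A^*_{P\times T}(-)\cong A^*_{G_0}(-)[u_r]$ naturally in the base, so it is compatible with restriction to $W$; homotopy invariance transports everything along $\phi$; and $Y\subseteq\phi^{-1}(Y_0)$ forces the restriction of $[Y]_{P\times T}$ to $\phi^{-1}(W)$ to vanish, whence each $\bar p_k=0$ by linear independence of the powers of $u_r$ over $A^*_{G_0}(W)$. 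Two small points worth making explicit: the parabolic is an unnecessary detour, since $Y$ is invariant under all of $G$ you could restrict directly to the Levi $\GL_{r-1}(\CC)\times\CC^\times\subset\GL_r(\CC)$ (through which $\phi$ is already equivariant) and get the same invariant ring; and for the final clause you should note that $Y_0$ is irreducible and $G_0$-invariant with smooth (open in affine space) complement, so the paper's Corollary~2.2 applies to give $\ker=0$ in degrees below $c_0$. Your argument has the virtue of making the paper's black box transparent in exactly the special case it needs.
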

\begin{corollary}\label{cor:width bound}
  Using the notation above, if $Y$ is the closure of its full rank
  matrices then the degree of $u_r$ in $[Y]_G$ is at most $n-r$.
\end{corollary}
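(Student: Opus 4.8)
The plan is to combine the degree bound of Theorem~\ref{thm:fr07} with a dimension count on the fibers of the row-forgetting map. Theorem~\ref{thm:fr07} says that the degree of $u_r$ in $[Y]_G$ is at most $c - c_0$, where $c$ and $c_0$ are the codimensions of $Y \subset \AA^{r\times n}$ and $Y_0 \subset \AA^{(r-1)\times n}$. Since $\dim \AA^{r\times n} = rn$ and $\dim \AA^{(r-1)\times n} = (r-1)n$, a direct computation gives
\[
c - c_0 = n - (\dim Y - \dim Y_0).
\]
It therefore suffices to prove that $\dim Y - \dim Y_0 \geq r$, for then $c - c_0 \leq n - r$ and the claim follows.

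The quantity $\dim Y - \dim Y_0$ is the generic fiber dimension of the dominant morphism $f\colon Y \to Y_0$ that forgets the last row, so I would bound these fibers from below using the $\GL_r(\CC)$-invariance of $Y$. Inside $\GL_r(\CC)$ sits the $r$-dimensional subgroup $H$ of matrices of the form $\begin{pmatrix} I_{r-1} & 0 \\ a & \lambda \end{pmatrix}$ with $a \in \CC^{r-1}$ and $\lambda \in \CC^\times$. Acting on a matrix $M$, the group $H$ fixes the first $r-1$ rows and replaces the last row $M_r$ by $a M_{[r-1]} + \lambda M_r$, where $M_{[r-1]}$ denotes the submatrix of the first $r-1$ rows. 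Taking $M$ to be a full-rank matrix of $Y$, the last row $M_r$ is not in the span $W$ of $M_{[r-1]}$, so as $(a,\lambda)$ varies the achievable last rows $a M_{[r-1]} + \lambda M_r$ sweep out an $r$-dimensional subset of $W + \CC M_r$. All of the resulting matrices lie in $Y$ by invariance and map to the single point $M_{[r-1]} \in Y_0$, so $\dim f^{-1}(M_{[r-1]}) \geq r$.

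It remains to pass from this to the generic fiber. By hypothesis the full-rank matrices are dense in $Y$, and since $f$ is continuous their images are dense in $Y_0$; they therefore meet the dense open locus of $Y_0$ over which $f$ has fibers of the generic (minimal) dimension $\dim Y - \dim Y_0$. Comparing the two descriptions of a fiber over such a point yields $\dim Y - \dim Y_0 \geq r$, as needed.

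The main obstacle is precisely this last fiber-dimension step: the $H$-orbit argument only bounds individual fibers, so one must be careful to apply it at a point where the fiber dimension realizes the generic value $\dim Y - \dim Y_0$, rather than at an arbitrary point where the fiber could be larger. The full-rank density hypothesis is exactly what guarantees such a point exists, and the $\GL_r(\CC)$-invariance is what produces the $r$-dimensional family of last rows there.
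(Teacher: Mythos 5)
Your proposal is correct and follows essentially the same route as the paper: reduce via Theorem~\ref{thm:fr07} to showing $\dim Y - \dim Y_0 \geq r$, then bound the general fiber of the row-forgetting map from below using $\GL_r(\CC)$-invariance. You are somewhat more explicit than the paper (naming the subgroup $H$ and carefully locating a full-rank point over the locus where the fiber dimension is generic), which is a welcome tightening of the paper's terser argument but not a different proof.
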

\begin{proof}
  It suffices to verify that $c-c_0 \leq n-r$, and this is equivalent
  to $\dim(Y) - \dim(Y_0) \geq r$. To verify this, we consider the
  non-empty open subset of $Y_0$ of rank $r-1$ matrices. The fiber of
  the natural map $Y \to Y_0$ over a matrix of rank $r-1$ is at least
  $(r-1)+1$ dimensional, since $Y$ is $\GL_r(\CC)$ invariant. Since
  the dimension of this general fiber is precisely $\dim(Y) -
  \dim(Y_0)$, we are done.
\end{proof}

\subsection{Matrix Schubert varieties} The \textbf{Schubert varieties}
of the Grassmannian $G(r,n)$ are $B \subset \GL_n(\CC)$ orbit
closures, where $B$ is a Borel subgroup. Fixing such a $B$, the
Schubert varieties are in bijection with partitions $\lambda =
(\lambda_1 \geq \dots \geq \lambda_r \geq 0)$ with $\lambda_1 \leq
n-r$. We denote the Schubert variety corresponding to $\lambda$ by
$\Omega_\lambda$.  Since the Schubert varieties $\Omega_\lambda$ form
a stratification of $G(r,n)$, the classes $[\Omega_\lambda]_T$ form a
$\mathbf{Z}[t_1,\dots,t_n]$-linear basis of $A^*_T(G(r,n))$.

For $r< n$, denote the set of \textit{f}ull \textit{r}ank $r$-by-$n$
matrices by $\AA^{\rm fr}$. Let $\pi : \AA^{\rm fr} \to G(r,n)$ denote
the projection map, which sends a matrix to the span of its
rows. Define a \textbf{matrix Schubert variety} as $X_\lambda = \overline{\pi^{-1}(\Omega_\lambda)}$, where the closure takes place within
$\AA^{r \times n}$. The equivariant Chow classes of these varieties
were computed by Knutson, Miller and Yong \cite{kmy}. It is important
here that one computes the class of the matrix Schubert variety
$X_\lambda$, and not a \textit{representative} for the class of the
Schubert variety $\Omega_\lambda$. This is to say, one does
computations in $A^*_G(\AA^{r \times n})$ instead of a quotient of
this ring.

Let $\lambda$ be a partition, which we regard simultaneously as a
decreasing sequence of non-negative integers, as above, and as a set
$\lambda = \{c_{ij} : 1 \leq i \leq \lambda_j\}$. We say that $c_{ij} \in
\lambda$ is above (or left) of $c_{k\ell} \in \lambda $ if $j < \ell$ (or $i <
k$). A \textbf{tableau} is a function $\tau: \lambda \to
\mathbf{N}^+$. 

A tableau $\tau: \lambda \to \mathbf{N}^+$ is said to be
\textbf{semistandard} provided that for all $c,d \in \lambda$
\begin{enumerate}
\item[(i)] if  $c$ lays to the left of  $d$ then $\tau(c) \leq \tau(d)$; and
\item[(ii)] if $c$ lays above $d$ then $\tau(c) < \tau(d)$.
\end{enumerate}
We let $SST(\lambda,r)$ be the set of all semistandard tableaux $\tau
: \lambda \to \{1,2,\dots,r\}$. The following result appears
in~\cite[Theorem~5.8]{kmy}, although its origins are much older.
\begin{theorem}[Knutson--Miller--Yong] For any matrix Schubert variety $X_\lambda
  \subset \AA^{r \times n}$,
  \[
  [X_\lambda]_G = \sum_{SST(\lambda,r)} \prod_{c_{ij} \in \lambda} (u_{\tau(c_{ij})} - t_{\tau(c_{ij}) + j-i}).
  \]
\end{theorem}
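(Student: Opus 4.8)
The right-hand side is the tableau presentation of a factorial Schur polynomial, so the plan is to compute $[X_\lambda]_G$ directly as an equivariant multidegree and then match the two expressions term by term. The one input I need from the group action is that the $G$-weight of the matrix coordinate in row $a$ and column $b$ is $u_a - t_b$; equivalently, the class of the coordinate hyperplane on which that entry vanishes is $u_a - t_b$. Every class produced below will be a product of such linear forms, which is already consistent with the shape of the claimed answer and with the fact that the codimension of $X_\lambda$ is $|\lambda|$, equal to the total degree of each summand on the right.

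First I would record the defining ideal $I_\lambda$ of $X_\lambda$. The Schubert variety $\Omega_\lambda$ is cut out by rank inequalities on the trailing column blocks of the row span, and pulling these back along $\pi$ and taking closure presents $X_\lambda$ as a one-sided ladder determinantal variety with an explicit generating set of minors of the generic matrix (that this ideal is prime and is exactly $I_\lambda$ is the classical theory of ladder determinantal ideals). I would then fix an antidiagonal term order and invoke the standard Gr\"obner theory of such ideals to conclude that these minors form a Gr\"obner basis and that $\mathrm{in}(I_\lambda)$ is a squarefree monomial ideal whose Stanley--Reisner complex is shellable, hence pure of codimension $|\lambda|$.

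Because the equivariant class is preserved under this flat degeneration and is additive over the top-dimensional components of $V(\mathrm{in}\,I_\lambda)$, which here are reduced and exhaust the scheme, I obtain
\[
[X_\lambda]_G = \sum_{F} \prod_{(a,b)\in S_F} (u_a - t_b),
\]
the sum over facets $F$ of the complex, where $S_F$ is the set of $|\lambda|$ coordinates vanishing on the corresponding coordinate subspace, and each subspace contributes the product of the weights of those coordinates since it is a transverse intersection of coordinate hyperplanes. Alternatively one may compute the $K$-polynomial $\K(X_\lambda)$ from the same degeneration and then apply Proposition~\ref{prop:kms} to pass to the Chow class.

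The crux, and the step I expect to be the main obstacle, is the combinatorial matching of facets with tableaux. I would construct a weight-preserving bijection between the facets $F$ and the elements of $SST(\lambda, r)$ that sends the box $c_{ij}$ with entry $a = \tau(c_{ij})$ to the coordinate of weight $u_a - t_{a+j-i}$, so that the product over $S_F$ becomes exactly $\prod_{c_{ij} \in \lambda}(u_{\tau(c_{ij})} - t_{\tau(c_{ij}) + j-i})$. For determinantal ideals this is the passage from the antidiagonal terms of the generating minors to families of non-crossing lattice paths, and thence by Lindstr\"om--Gessel--Viennot to column-strict tableaux; the real work lies in checking that this dictionary reproduces precisely the diagonal index shift $a+j-i$ and that the non-crossing condition is equivalent to the semistandardness conditions defining $SST(\lambda,r)$.
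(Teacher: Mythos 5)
This theorem is not proved in the paper at all: it is quoted from Knutson--Miller--Yong \cite[Theorem~5.8]{kmy}, so there is no internal argument to compare against. Your outline is the standard Gr\"obner-geometry proof from that literature, and every structural step you list is sound: $X_\lambda$ is a one-sided ladder determinantal variety whose defining minors are a Gr\"obner basis for a (anti)diagonal term order, the initial ideal is squarefree with a shellable Stanley--Reisner complex, the equivariant class is preserved under the flat degeneration and is the sum over facets of products of coordinate weights $u_a - t_b$, and the weight of the $(a,b)$ coordinate is indeed $u_a - t_b$ for the action $(g,t)\cdot m = gmt^{-1}$.

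The genuine gap is the step you yourself flag as the crux: the weight-preserving bijection between facets of the initial complex and $SST(\lambda,r)$ is asserted, not constructed, and it is the entire combinatorial content of the identity --- everything preceding it is formal and would equally well ``prove'' any formula of the right degree. In particular you have not verified the only nontrivial numerical claim, namely that the coordinate attached to the box $c_{ij}$ filled with $a$ sits in column $a+j-i$ of the generic matrix; this bookkeeping depends delicately on the choice of term order (antidiagonal leading terms give pipe dreams \`a la Knutson--Miller, while the nonintersecting-lattice-path/Lindstr\"om--Gessel--Viennot dictionary you invoke is the one naturally adapted to the diagonal order, so you should commit to one and redo the accounting for it), and also on the paper's transposed convention, in which $c_{ij}$ lies in column $i$ and row $j$, so that $j-i$ is row minus column rather than the usual content. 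To close the proof you must either exhibit the bijection and check the shift explicitly, or replace this paragraph by a citation to a precise statement (e.g.\ the identification of the Grassmannian double Schubert polynomial with the factorial Schur polynomial, or \cite[Theorem~5.8]{kmy} itself, at which point the rest of your argument becomes redundant).
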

The displayed polynomial is called a \textbf{factorial Schur
  polynomial}. It is important to note that since the partitions
$\lambda$ above have $\lambda_1 \leq n-r$, the degree of any $u_i$ in
$[X_\lambda]_G$ is at most $n-r$.
\begin{corollary}\label{cor:basis}
  The following is a $\ZZ[t_1,\dots,t_n]$-linear basis for
  $A_G^*(\AA^{r \times n})$: The set of classes of matrix Schubert
  varieties together with the Schur polynomials
  $s_\lambda(u_1,\dots,u_r)$ where $\lambda_1 \geq n-r+1$.
\end{corollary}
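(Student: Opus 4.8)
The plan is to realize the proposed set as the image of the standard Schur basis under an invertible, unitriangular change of basis. First I would record the underlying free-module structure: since the $t$-variables are already fixed by $S_r$, one has
\[
A^*_G(\AA^{r\times n}) = \ZZ[u_1,\dots,u_r,t_1,\dots,t_n]^{S_r}
 = \ZZ[t_1,\dots,t_n]\otimes_\ZZ \ZZ[u_1,\dots,u_r]^{S_r},
\]
which is free as a $\ZZ[t_1,\dots,t_n]$-module on the classical $\ZZ$-basis of $\ZZ[u_1,\dots,u_r]^{S_r}$ given by the Schur polynomials $s_\mu(u_1,\dots,u_r)$, as $\mu$ runs over partitions with at most $r$ parts. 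I would then split this index set into the \emph{box} partitions $B=\{\mu : \mu_1\le n-r\}$ and the \emph{large} ones $L=\{\mu:\mu_1\ge n-r+1\}$, noting that the matrix Schubert varieties $X_\lambda$ are indexed exactly by $B$. The proposed set is obtained from the Schur basis by replacing $s_\lambda$ with $[X_\lambda]_G$ for $\lambda\in B$ and leaving $s_\mu$ untouched for $\mu\in L$.

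The heart of the argument is to expand each $[X_\lambda]_G$ with $\lambda\in B$ in the Schur basis. By the Knutson--Miller--Yong formula, $[X_\lambda]_G$ is the factorial Schur polynomial, which the formula exhibits as a sum of products of $|\lambda|$ linear forms $u_{\tau(c_{ij})}-t_{\tau(c_{ij})+j-i}$, hence is homogeneous of total degree $|\lambda|$ when each $u_i$ and $t_j$ has degree one. Writing $[X_\lambda]_G=\sum_\mu f_\mu(t)\,s_\mu(u)$ with $f_\mu\in\ZZ[t_1,\dots,t_n]$, homogeneity forces $\deg f_\mu=|\lambda|-|\mu|$, so only $\mu$ with $|\mu|\le|\lambda|$ occur. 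Setting every $t_j=0$ collapses each factor to $u_{\tau(c_{ij})}$, and the sum over $SST(\lambda,r)$ becomes the combinatorial expression for $s_\lambda(u_1,\dots,u_r)$; comparing with $\sum_\mu f_\mu(0)\,s_\mu(u)$ and invoking linear independence of Schur polynomials gives $f_\lambda=1$ and $f_\mu(0)=0$ for $\mu\ne\lambda$. Combined with the degree constraint this yields
\[
[X_\lambda]_G = s_\lambda(u) + \sum_{|\mu|<|\lambda|} f_\mu(t)\,s_\mu(u).
\]

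With this in hand the conclusion is formal. Reading the identities above as a transition matrix $M$ from $\{s_\mu\}$ to the proposed set, the column indexed by $\lambda\in B$ has a $1$ in position $\lambda$ and otherwise nonzero entries only in positions $\mu$ with $|\mu|<|\lambda|$, while the column indexed by $\mu\in L$ is the standard basis vector. Thus $M=I+N$, where $N$ strictly decreases the size $|\cdot|$ and so is locally nilpotent (each graded piece being of finite rank over $\ZZ$), whence $M$ is invertible over $\ZZ[t_1,\dots,t_n]$. Equivalently, I would argue by induction on $|\mu|$: for $\mu\in L$ the element $s_\mu$ already lies in the new set, and for $\mu\in B$ one rewrites $s_\mu=[X_\mu]_G-\sum_{|\nu|<|\mu|}f_\nu(t)\,s_\nu$ and applies the inductive hypothesis to the smaller-size terms. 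This shows the new set spans, and unitriangularity shows it is independent, so it is a $\ZZ[t_1,\dots,t_n]$-basis.

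The only genuine input — and the step I expect to require the most care — is the leading-term computation in the second paragraph: that the factorial Schur polynomial degenerates to the ordinary Schur polynomial $s_\lambda(u)$ under $t\mapsto 0$ and that its corrections are supported on partitions of strictly smaller size. Everything else is the purely formal unitriangularity argument, which notably needs no control over \emph{which} smaller $\mu$ appear, nor over whether they lie in $B$ or in $L$.
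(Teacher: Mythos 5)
Your proof is correct, and it fills in exactly the argument the paper leaves implicit: the corollary is stated as an immediate consequence of the Knutson--Miller--Yong formula, the point being that each $[X_\lambda]_G$ is a factorial Schur polynomial, hence equals $s_\lambda(u)$ plus $\ZZ[t_1,\dots,t_n]$-multiples of Schur polynomials of strictly smaller degree, so the change of basis is unitriangular. Your observation that the filtration by $|\mu|$ alone suffices (with no need to track whether the lower-order $s_\mu$ have $\mu_1\le n-r$) is a small but genuine simplification over the route suggested by the paper's remark on $u$-degrees.
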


\subsection{Splitting of the localization sequence}
In this section we prove the following result.
\begin{theorem}\label{thm:splitting}
  The natural map of $\mathbf{Z}[t_1,\dots,t_n]$-modules,
  \[
  A^*_G(\AA^{r \times n}) \to A^*_G(\AA^{\rm fr}) \approx A^*_T(G(r,n)),
  \]
  has a splitting $s$ that satisfies the following properties: For
  every $T$ invariant subvariety $Y \subset G(r,n)$,
  \begin{enumerate}
  \item[(i)]  $s([Y]_T) = [\overline{\pi^{-1}Y}]_G$,
  \item[(ii)] $s([Y]_T)$ is a $\ZZ[t_1,\dots,t_n]$-linear combination
    of classes of matrix Schubert varieties
    $[X_\lambda]_G$. Equivalently, the Schur polynomial expansion of
    $s([Y]_T)$ is a linear combination of Schur polynomiala
    $s_\lambda(u)$ with $\lambda_1 \leq n-r$.
  \end{enumerate}
\end{theorem}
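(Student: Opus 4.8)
The plan is to realize the stated map concretely and build $s$ by inverting it on a distinguished submodule. Write $j : \AA^{\rm fr} \hookrightarrow \AA^{r \times n}$ for the open inclusion whose complement is the locus of rank-deficient matrices, and let $\phi$ denote the composite $A^*_G(\AA^{r \times n}) \xrightarrow{j^*} A^*_G(\AA^{\rm fr}) \approx A^*_T(G(r,n))$, the second arrow being the isomorphism arising because $\pi$ exhibits $\AA^{\rm fr}$ as a principal $\GL_r(\CC)$-bundle over $G(r,n)$ on which the residual torus $T$ acts. This $\phi$ is the surjection in the theorem. Let $W \subseteq A^*_G(\AA^{r \times n})$ be the $\ZZ[t_1,\dots,t_n]$-span of the matrix Schubert classes $[X_\lambda]_G$; by Corollary~\ref{cor:basis} these are part of a basis, so $W$ is free on them. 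I will define $s$ as the inverse of $\phi$ restricted to $W$, so that everything reduces to two points: that $\phi|_W$ is an isomorphism onto $A^*_T(G(r,n))$, and that $[\overline{\pi^{-1}Y}]_G$ lands in $W$.

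The first point rests on the identity $\phi([\overline{\pi^{-1}Y}]_G) = [Y]_T$, valid for any closed irreducible $T$-invariant $Y \subseteq G(r,n)$. Indeed $j^*[\overline{\pi^{-1}Y}]_G$ is the class of $\overline{\pi^{-1}Y} \cap \AA^{\rm fr} = \pi^{-1}(Y)$, and under the identification $A^*_G(\AA^{\rm fr}) \approx A^*_T(G(r,n))$ the class $[\pi^{-1}(Y)]_G$ corresponds to $[Y]_T$. Specializing to $Y = \Omega_\lambda$ and using $X_\lambda = \overline{\pi^{-1}(\Omega_\lambda)}$ gives $\phi([X_\lambda]_G) = [\Omega_\lambda]_T$. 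Since the $[\Omega_\lambda]_T$ with $\lambda_1 \le n-r$ form a $\ZZ[t_1,\dots,t_n]$-basis of $A^*_T(G(r,n))$, the map $\phi|_W$ carries a basis of $W$ bijectively to a basis of the target, hence is an isomorphism. Setting $s := (\phi|_W)^{-1}$ yields a $\ZZ[t_1,\dots,t_n]$-linear splitting with $\phi \circ s = \mathrm{id}$, and its image is exactly $W$, which is property (ii).

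It remains to prove (i), namely that $[\overline{\pi^{-1}Y}]_G \in W$; combined with the displayed identity and $s = (\phi|_W)^{-1}$ this gives $s([Y]_T) = [\overline{\pi^{-1}Y}]_G$. Here I would invoke Corollary~\ref{cor:width bound}: since $\pi^{-1}(Y)$ consists of full rank matrices and is dense in $\overline{\pi^{-1}Y}$, the latter is the closure of its full rank locus, so the degree of $u_r$ in $[\overline{\pi^{-1}Y}]_G$ is at most $n-r$. The class is symmetric in the $u_i$, so in fact every single-variable degree is at most $n-r$. The final input is the combinatorial fact that a symmetric polynomial over $\ZZ[t_1,\dots,t_n]$ has all single-variable $u$-degrees at most $n-r$ if and only if its Schur expansion uses only $s_\lambda(u)$ with $\lambda_1 \le n-r$; this follows from the dominance-unitriangularity between the monomial and Schur bases, since $\mu \le \lambda$ in dominance forces $\mu_1 \le \lambda_1$. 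Because the factorial Schur polynomials $[X_\lambda]_G$ are related to the ordinary $s_\lambda(u)$ by a change of basis that is triangular in $|\lambda|$ and preserves the bound $\lambda_1 \le n-r$ (the correction terms are indexed by $\nu \subseteq \lambda$, whence $\nu_1 \le \lambda_1$), the span of $\{s_\lambda(u) : \lambda_1 \le n-r\}$ equals $W$, and the conclusion $[\overline{\pi^{-1}Y}]_G \in W$ follows.

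I expect the main obstacle to be exactly this last degree argument, which is where the geometry enters: extracting the bound on the $u_r$-degree from the full-rank density of $\pi^{-1}(Y)$ through the Feh\'er--Rim\'anyi estimate, and then converting a bound on single-variable degree into membership in the matrix-Schubert span. By contrast, pinning down $\ker\phi$ is unnecessary: the splitting is assembled solely from the isomorphism $\phi|_W$, so I never need to check that the excluded basis elements $s_\mu(u)$ with $\mu_1 \ge n-r+1$ span the kernel. The one routine point I would still record carefully is the compatibility of $j^*$ with the identification $A^*_G(\AA^{\rm fr}) \approx A^*_T(G(r,n))$, ensuring $[\pi^{-1}(Y)]_G \mapsto [Y]_T$.
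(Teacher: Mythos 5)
Your proposal is correct and follows essentially the same route as the paper: both define $s$ by sending the Schubert basis $[\Omega_\lambda]_T$ to the matrix Schubert classes $[X_\lambda]_G$ (equivalently, inverting the surjection on the span $W$ of those classes), and both reduce property (i) to the Feh\'er--Rim\'anyi degree bound of Corollary~\ref{cor:width bound} placing $[\overline{\pi^{-1}Y}]_G$ in $W$, after which the coefficients are pinned down by restricting to $\AA^{\rm fr}$ where the $[\pi^{-1}\Omega_\lambda]_G$ form a basis. The only difference is expository: you spell out the dominance-order argument converting the single-variable degree bound into the Schur-support condition, which the paper handles by citation.
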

\begin{proof}
  A splitting $s$ is uniquely determined by condition (ii). So it
  suffices to show that if $[Y]_T = \sum_\lambda q_\lambda
  [\Omega_\lambda]_T$ then $[\overline{\pi^{-1}Y}]_G = \sum_\lambda
  q_\lambda [X_\lambda]_G$.

  Suppose that $Y$ is a $T$-invariant subvariety
  of $G(r,n)$. It follows that $X = \overline{\pi^{-1} Y}$ satisfies
  the hypothesis of Corollary~\ref{cor:width bound} and so the degree
  of $u_r$ (and hence any $u$ variable) in $[X]_G$ is at most
  $n-r$. Hence $[X]_G$ is a $\ZZ[t_1,\dots,t_n]$-linear combination of
  classes of Schur polynomials $s_\lambda$ with $\lambda_1 \leq n-r$
  (\textit{cf.}~ \cite[Theorem~7.4]{fr12}). We conclude that $[X]_G$
  is a linear combination of the the classes of the matrix Schubert
  varieties: $[X]_G = \sum_\lambda p_\lambda [X_\lambda]_G$.

  We can uniquely write
  \[
  [Y]_T = \sum q_\lambda [\Omega_\lambda]_T \in A^*_T(G(r,n)),
  \]
  for some polynomials $q_\lambda \in \ZZ[t_1,\dots,t_n]$. Since
  $\pi^* :A^*_T(G(r,n)) \to A^*_G(\AA^{\rm fr})$ is an isomorphism,
  this yields
  \[
  [\pi^{-1}Y]_G = \sum_\lambda q_\lambda [ \pi^{-1} \Omega_\lambda ]_G.
  \]
  We claim that $[X]_G = \sum_\lambda q_\lambda [ X_\lambda ]_G$.
  To see this, note that
  \[
  \sum_\lambda (q_\lambda-p_\lambda)[X_\lambda]_G \in
  \ker(A_G^*(\AA^{r \times n}) \stackrel{i^*}{\to} A_G^*(\AA^{\rm fr})).
  \]
  Applying $i^*$ to this class gives $\sum_\lambda
  (q_\lambda-p_\lambda)[\pi^{-1} \Omega_\lambda] =0$. However, the
  classes $[\pi^{-1}\Omega_\lambda]_G$ form a
  $\ZZ[t_1,\dots,t_n]$-linear basis for $A^*_G(\AA^{\rm fr})$ so this
  means $q_\lambda = p_\lambda$.
\end{proof}

\section{Matrix orbit closures and matroids}\label{sec:matrix orbit closures}
In this section we prove that the equivariant Chow class of a
$G$-orbit closure in $\AA^{r \times n}$ is determined by a matroid. We
begin by stating the background we need from matroid theory.
\subsection{Matroid terminology}
Write $[n]$ for $\{1,2,\dots,n\}$ and $\binom{[n]}{r}$ for the set of
size $r$ subsets of $[n]$. Let $v \in \AA^{r \times n}$ be any
$r$-by-$n$ matrix. The \textbf{matroid} of $v$, denoted $M(v)$, is the
set of subsets $I \subset [n]$ where the column restricted matrix
$v_I$ has rank $|I|$.  When the rank of $v$ is $k$, so that $k$ is the
maximum cardinality of a set in~$M(v)$, we say that $M(v)$
has rank $k$. In this case $M(v)$ is determined by its size $k$ sets,
which are called its bases.

Given a matroid $M$ with ground set $[n]$ we define its
\textbf{matroid base polytope} $P(M)$ as follows: Let $\{e_i\} \subset
\mathbf{R}^n$ be the standard basis vectors and write $e_I = \sum_{i
  \in I} e_i$. We define $P(M)$ to be the convex hull in
$\mathbf{R}^n$ of $e_B$ where $B$ ranges over the bases of $M$. The
points $e_B$ are actually the vertices of $P(M)$, and the convex hull
of two vertices $e_B$ and $e_{B'}$ forms an edge of $P(M)$ if and only
$B$ and $B'$ differ by exactly one element \cite{Edmonds,GGMS}.

\subsection{Matroid invariance of Chow classes} A \textbf{matrix orbit
  closure} is a $G$ orbit closure of a point in $\AA^{r \times n}$. We
write $X_v$ for the orbit closure of a matrix $v \in \AA^{r \times n}$
and $X_v^\circ$ for the $G$ orbit itself. When $v$ is a rank $r$
matrix we can project $X_v^\circ$ to the Grassmannian. The result
is the $T$ orbit closure of $\pi(v)$, which we denote by $Y_{\pi(v)}$.

The following result is proven by Speyer
\cite[Proposition~12.5]{speyer}.
\begin{theorem}
  For any rank $r$ matrix $v\in \AA^{r \times n}$, the class of the
  structure sheaf of $Y_{\pi(v)}$ in the $T$-equivariant $K$-theory of
  $G(r,n)$ is determined by the matroid $M(v)$.
\end{theorem}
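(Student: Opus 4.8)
The plan is to reduce to equivariant localization at the torus-fixed points of $G(r,n)$ and to show that each fixed-point restriction of $[\mathcal{O}_{Y_{\pi(v)}}]$ is a function of $M(v)$ alone. Write $Y = Y_{\pi(v)}$ and recall that the $T$-fixed points of $G(r,n)$ are the coordinate subspaces $p_I$ indexed by $I \in \binom{[n]}{r}$. Because $G(r,n)$ is smooth with finitely many fixed points and admits a $T$-equivariant cellular structure, the restriction map
\[
K_0^T(G(r,n)) \longrightarrow \prod_{I \in \binom{[n]}{r}} K_0^T(p_I)
\]
is injective. Hence it is enough to prove that each restriction $i_{p_I}^*[\mathcal{O}_Y]$ depends only on $M(v)$. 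Now $p_I \in Y$ precisely when $e_I$ is a vertex of the matroid base polytope $P(M(v))$, that is, when $I$ is a basis of $M(v)$; when $I$ is not a basis the restriction vanishes, which is matroid-determined.

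For a basis $I$ I would pass to the standard affine chart $U_I \subset G(r,n)$, the $T$-invariant open locus where the $I$th Pl\"ucker coordinate is nonzero. It is $T$-equivariantly isomorphic to $\AA^{r(n-r)}$ with coordinates $x_{ij}$ ($i \in I$, $j \notin I$) on which $T$ acts through the fixed characters $t_j t_i^{-1}$. Since $U_I$ is $T$-invariant and contains $\pi(v)$ exactly when $I$ is a basis, the whole orbit $T\cdot \pi(v)$ lies in $U_I$, so $Y \cap U_I$ is the closure of $T\cdot\pi(v)$ computed inside $U_I$. Representing $\pi(v)$ by its reduced matrix $[\,\mathrm{Id}\,|\,A\,]$ after ordering the columns of $I$ first, the point $\pi(v)$ becomes $A = (a_{ij}) \in \AA^{r(n-r)}$, and $Y \cap U_I$ is exactly the $T$-orbit closure of $A$, an affine toric variety. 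Because $U_I$ is an affine space that retracts equivariantly onto $p_I$, the restriction $i_{p_I}^*[\mathcal{O}_Y]$ equals the equivariant K-polynomial $\K(Y \cap U_I)$, which is determined by the embedding $Y \cap U_I \hookrightarrow U_I$ as a $T$-scheme.

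The crux is to see that this embedding depends only on the support $S_I = \{(i,j) : a_{ij} \ne 0\}$ and not on the actual nonzero values. Expanding the maximal minors of $[\,\mathrm{Id}\,|\,A\,]$ shows $a_{ij} \ne 0$ exactly when $(I \setminus \{i\}) \cup \{j\}$ is a basis, so $S_I$ is read directly off $M(v)$. To eliminate the dependence on the values, I would use the diagonal automorphism of $\AA^{r(n-r)}$ scaling $x_{ij}$ by $a_{ij}^{-1}$ for $(i,j) \in S_I$: it lies in the big torus, hence commutes with the $T$-action and fixes $p_I$, and it carries $A$ to the indicator point $\mathbf{1}_{S_I}$. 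This identifies $Y \cap U_I$, as an embedded $T$-subscheme of $U_I$, with the orbit closure of $\mathbf{1}_{S_I}$, whose coordinate ring is the semigroup algebra on the characters $\{t_j t_i^{-1} : (i,j) \in S_I\}$ --- data depending only on $M(v)$. Thus $\K(Y \cap U_I) = i_{p_I}^*[\mathcal{O}_Y]$ is matroid-determined for every $I$, and injectivity of the fixed-point restriction finishes the argument.

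I expect the main obstacle to be verifying that it is the \emph{embedded} local structure, rather than merely the abstract toric variety, that is matroid-invariant; this is exactly what the diagonal rescaling secures, by turning the a priori value-dependent point $A$ into the purely combinatorial indicator $\mathbf{1}_{S_I}$. A secondary technical point is justifying integral (not merely rational) injectivity of the restriction to fixed points, for which I would invoke the equivariant cell decomposition of the Grassmannian.
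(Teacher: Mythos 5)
Your proof is correct, but note that the paper itself does not prove this theorem: it quotes it from Speyer's Proposition~12.5, and Speyer's argument (like the closely related proof of Lemma~\ref{lem:cohom class of T-orbit} later in the paper) runs through the global identification of $Y_{\pi(v)}$ with the toric variety of the matroid base polytope $P(M(v))$, which requires White's normality theorem, and then reads off each fixed-point restriction from the tangent cone of $P(M(v))$ at the corresponding vertex. Your route is genuinely different and somewhat more elementary: after localizing you work chart by chart, identify $Y\cap U_I$ with the $T$-orbit closure of the reduced matrix $A\in\AA^{r(n-r)}$, and then use translation by $A$ inside the big torus of the coordinate subspace $\AA^{S_I}$ --- a $T$-equivariant automorphism of the ambient affine space fixing $p_I$ --- to replace $A$ by the indicator point $\mathbf{1}_{S_I}$. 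Since such an automorphism induces the identity on $K_0^T(U_I)\cong R(T)$, the class $\K(Y\cap U_I)$ is the $K$-polynomial of the toric ideal of the character multiset attached to $S_I$, which depends only on $S_I$ and hence only on $M(v)$. This buys you independence from normality and from any global toric model; what it costs is that you must separately supply the two standard inputs you flag: (a) $p_I\in Y_{\pi(v)}$ if and only if $I$ is a basis --- though the direction you actually use for non-bases is immediate, since the $I$th Pl\"ucker coordinate vanishes identically on the orbit, so $p_I$ lies in the open complement of $Y_{\pi(v)}$ and the restriction is $0$; and (b) integral injectivity of $K_0^T(G(r,n))\to\prod_I K_0^T(p_I)$, which follows from the $T$-stable Schubert cell filtration (the matrix of restrictions of Schubert structure sheaves to fixed points is triangular with nonzero diagonal entries over the domain $R(T)$). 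With those standard references in place the argument is complete.
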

As an immediate corollary we have:
\begin{corollary}
  The $T$-equivariant Chow class $[Y_{\pi(v)}]_T$ is determined by the
  matroid $M(v)$.
\end{corollary}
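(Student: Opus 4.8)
The plan is to obtain the corollary from Speyer's theorem above by passing from $K$-theory to Chow theory through a single operation that is independent of the matrix $v$. Since that theorem asserts that the structure-sheaf class $\K(Y_{\pi(v)}) \in K_0^T(G(r,n))$ depends only on $M(v)$, it suffices to exhibit a fixed map $K_0^T(G(r,n)) \to A^*_T(G(r,n))$ that sends $\K(Y_{\pi(v)})$ to $[Y_{\pi(v)}]_T$; applying any such map to a matroid-determined input produces a matroid-determined output.

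First I would recall the comparison between equivariant $K$-theory and the equivariant Chow ring on a smooth variety. Filtering $K_0^T(G(r,n))$ by codimension of support yields an associated graded group that surjects onto $A^*_T(G(r,n))$, and under this surjection the leading term of $\K(Y)$ is exactly the fundamental class $[Y]_T$; see \cite{eg}. On an affine space this comparison takes the explicit form of the substitution recipe of Proposition~\ref{prop:kms}: replace each $t_i$ by $1-t_i$, expand as a power series, and keep the terms of lowest total degree. In either formulation the passage from a $K$-class to its leading Chow class is a single fixed operation, carried out on $G(r,n)$ with no reference to $v$. Feeding the matroid-invariant class $\K(Y_{\pi(v)})$ through it therefore exhibits $[Y_{\pi(v)}]_T$ as a function of $M(v)$ alone, which is the assertion.

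The one point requiring care is that Proposition~\ref{prop:kms} as stated applies to an affine space, whereas $Y_{\pi(v)}$ lives on the Grassmannian; thus one cannot invoke that proposition verbatim and must instead appeal to the general $K$-to-Chow comparison of \cite{eg} on the smooth variety $G(r,n)$. An alternative that stays closer to Proposition~\ref{prop:kms} is to transport the computation to affine space: under the isomorphism $A^*_T(G(r,n)) \cong A^*_G(\AA^{\rm fr})$ used in Theorem~\ref{thm:splitting} (and its $K$-theoretic analog given by flat pullback along $\pi$), the class $[Y_{\pi(v)}]_T$ becomes the restriction to $\AA^{\rm fr}$ of $[X_v]_G$, which one extracts from $\K(X_v)$ by Proposition~\ref{prop:kms} in $\AA^{r \times n}$; the work then shifts to checking that this restriction is compatible with the matroid invariance already established on the Grassmannian.
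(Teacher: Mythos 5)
Your proposal is correct and is essentially the argument the paper has in mind: the paper offers no proof at all, treating the corollary as immediate precisely because the passage from $\K(Y_{\pi(v)})$ to $[Y_{\pi(v)}]_T$ is a fixed operation (take the leading term of the $K$-class under the filtration by dimension of support, which for a cellular variety such as $G(r,n)$ is an integral isomorphism on associated graded pieces, or equivalently apply the recipe of Proposition~\ref{prop:kms} on the affine patches around the $T$-fixed points and use injectivity of localization, exactly as the paper does later in Lemma~\ref{lem:cohom class of T-orbit}). One small slip: the standard comparison map is a surjection $A^*_T(G(r,n)) \to \operatorname{gr} K_0^T(G(r,n))$ sending $[Y]_T \mapsto \K(Y)$ modulo lower terms, not a surjection in the direction you wrote, but since it is invertible here this does not affect the argument.
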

Applying Theorem~\ref{thm:splitting} gives:
\begin{theorem}\label{thm:matroid inv}
  Assume the matrix $v$ has rank $r$. The class $[X_v]_G$ depends only
  on the matroid  $M(v)$.
\end{theorem}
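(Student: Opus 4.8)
The plan is to derive Theorem~\ref{thm:matroid inv} as an essentially formal consequence of the splitting map $s$ constructed in Theorem~\ref{thm:splitting}, reducing the statement about the matrix orbit closure $X_v$ in $\AA^{r \times n}$ to the already-established fact about its projection to the Grassmannian. Concretely, I would observe that the splitting $s$ is a fixed $\ZZ[t_1,\dots,t_n]$-linear map, defined independently of any particular variety, and that property (i) in Theorem~\ref{thm:splitting} identifies the value of $s$ on the class of any $T$-invariant subvariety $Y \subset G(r,n)$ with the class of its matrix analog $[\overline{\pi^{-1}Y}]_G$. The entire argument therefore hinges on recognizing $X_v$ as exactly such a matrix analog.

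The key steps, in order, are as follows. First I would apply $s$ to the class $[Y_{\pi(v)}]_T$, where $Y_{\pi(v)}$ is the $T$-orbit closure in $G(r,n)$ of the point $\pi(v)$; since $Y_{\pi(v)}$ is a closed, irreducible, $T$-invariant subvariety of the Grassmannian, Theorem~\ref{thm:splitting}(i) applies and gives
\[
s\bigl([Y_{\pi(v)}]_T\bigr) = \bigl[\overline{\pi^{-1} Y_{\pi(v)}}\bigr]_G.
\]
Second, I would verify the geometric identity $X_v = \overline{\pi^{-1} Y_{\pi(v)}}$, so that the right-hand side above is precisely $[X_v]_G$. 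This is where the hypothesis that $v$ has rank $r$ is used: the full-rank matrices in $X_v^\circ$ project under $\pi$ onto the $T$-orbit $\pi(X_v^\circ) = Y_{\pi(v)}^\circ$, and conversely $\pi^{-1}$ of this orbit, intersected with the full-rank locus, is the $\GL_r(\CC)\times T$-orbit $X_v^\circ$ up to the forgotten scalar ambiguity; taking closures in $\AA^{r \times n}$ then identifies the two varieties. Third, I would invoke the Corollary preceding the theorem, which states that $[Y_{\pi(v)}]_T$ depends only on the matroid $M(v)$. Chaining these three facts yields that $[X_v]_G = s([Y_{\pi(v)}]_T)$ is a function of $M(v)$ alone, since $s$ is a fixed map.

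The main obstacle I expect is the second step, the precise verification that $X_v$ coincides with $\overline{\pi^{-1} Y_{\pi(v)}}$ as varieties (not merely that they have the same full-rank locus). Taking the scheme-theoretic or set-theoretic closure can in principle introduce lower-rank components, and one must confirm that both constructions produce the same irreducible closed set. The cleanest way to handle this is to note that $X_v^\circ$ and $\pi^{-1}(Y_{\pi(v)}^\circ)$ agree on the dense open full-rank locus of each, so their closures in $\AA^{r \times n}$ agree as the unique irreducible variety containing that common dense open subset; because $X_v$ is by definition irreducible and $\overline{\pi^{-1}Y_{\pi(v)}}$ is irreducible by the irreducibility of $Y_{\pi(v)}$, no spurious components arise. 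Once this identification is secured, the remainder of the proof is purely formal, being the application of the linear map $s$ to a matroid-determined input.
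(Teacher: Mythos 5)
Your proposal is correct and follows exactly the route the paper takes: the paper derives Theorem~\ref{thm:matroid inv} by applying the splitting of Theorem~\ref{thm:splitting} to the corollary that $[Y_{\pi(v)}]_T$ is matroid-determined, with the identification $X_v = \overline{\pi^{-1}Y_{\pi(v)}}$ left implicit. Your verification of that identification via the common dense full-rank locus is a welcome filling-in of a detail the paper omits, but it is not a different argument.
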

The case when $v$ has rank less than $r$ can be obtained by taking a
$\operatorname{rank}(v)$-by-$n$ matrix $u$ with the same row span as
$v$, considering $X_u$ in a smaller matrix space and taking the $G$
orbit closure of this variety in $\AA^{r \times n}$ (\textit{cf.}
\cite[Theorem~7.5]{fr12}).

\section{Chow classes for uniform matroids}\label{sec:uniform}
In this section our goal is to explicitly compute $[X_v]_G$ when $v$
is a sufficiently general matrix in $\AA^{r \times n}$. Here,
sufficiently general means that the matroid $M(v)$ is
\textit{uniform}, i.e., no maximal minor of $v$ vanishes. That such a
formula exists is due to Theorem~\ref{thm:matroid inv}. To find this
class, we will follow the idea of Theorem~\ref{thm:splitting}, and
compute the Chow class of the toric variety $Y_{\pi(v)}$ in
$A^*_T(G(r,n))$ and lift the result to $A^*_G(\AA^{r\times n})$.

\subsection{Equivariant localization and the Grassmannian}
In order to state our main result we will need to gather some
background material about equivariant localization and the
Grassmannian.

The Grassmannian $G(r,n)$ has a finite set of $T$-fixed points: they
are the $r$-dimensional coordinate subspaces of $\CC^n$.  We denote by
$x_B$ the fixed point in which the unique coordinate \emph{not} fixed to
zero is the one indexed by the set $B\in\binom{[n]}{r}$.  The Pl\"ucker
embedding embeds $G(r,n)$ equivariantly in $\mathbf{P}^{\binom nr-1} =
\mathbf{P} \bigwedge^r \CC^n$, with $T$ action inherited from the
natural one on $\CC^n$.  A fixed point $x_B$ of~$G(r,n)$ is sent to a
coordinate point in $\mathbf{P}^{\binom nr-1}$, and the character by
which $T$ acts on the corresponding coordinate is $t^B = \prod_{i\in
  B}t_i$.

The inclusion $\iota$ of this discrete fixed set $G(r,n)^T$ into
$G(r,n)$ induces a restriction map
\[
\iota^*:A^*_T(G(r,n)) \to A^*_T(G(r,n)^T).
\]
Its target $A^*_T(G(r,n)^T)$ is a direct sum of polynomial rings 
$A^*_T(\mathrm{pt})=\ZZ[t_1\dots,t_n]$, one for
each fixed point. 
The restriction of the class of a $T$-equivariant subvariety
to a fixed point $x$ will equal the restriction of this class
to an affine space $\AA$ containing $x$ on which $T$ acts linearly,
under the natural isomorphism 
$A^*_T(\AA) = \ZZ[t_1\dots,t_n] = A^*_T({\rm pt})$.
We will let $c|_x\in\ZZ[t_1,\ldots,t_n]$ denote the 
restriction of the class $c$ to~$x$.

Since the Grassmannian is smooth and projective and $T$ is a torus,
results of Brion \cite[Theorems 3.2--3.4]{brion} imply that $\iota^*$
is injective and we can identify the image of $\iota^*$. It consists
of the tuples of polynomials $f = (f_B: B\in\binom{[n]}{r})$ such that
\[
f_B - f_{B \cup j \setminus i} \in \< t_j - t_i \>
\]
for all $i \in B$ and $j \notin B$. Such results were also proved by
Edidin and Graham \cite{eg2} and are closely related to the
topological results of Goresky, Kottwitz and MacPherson \cite{GKM}. 

\subsection{Vector bundles on the Grassmannian}
We will let $\mathcal{S}$ denote the tautological rank $r$ vector
vector bundle over $G(r,n)$. It is a subbundle of the trivial bundle
$\CC^n$ and its fiber over $x \in G(r,n)$ is the $r$-dimensional
subspace $x \subset \CC^n$. The quotient bundle $\mathcal{Q}$ is
$\CC^n/\mathcal{S}$.

When we write a symmetric function of a vector bundle $\mathcal E$, we
mean that symmetric function of its Chern roots, so that $e_k(\mathcal
E) = c_k(\mathcal E)$.  For later reference, we give explicit
expansions of $s_\nu(\mathcal{S}^\vee)$ and $s_\nu(\mathcal Q)$, which
are elements of $A_T^*(G(r,n))$, as polynomials in the variables $u_i$
and $t_j$. The formulae are these:
\begin{align*}
s_\nu(\mathcal \mathcal{S}^\vee) &= s_\nu(  u), \\
s_\nu(\mathcal Q) &= \omega \left( s_\nu(u,t) \right) = \sum_{\lambda,\mu} c_{\lambda,\mu}^\nu s_\lambda(t) s_{\mu'}(u).
\end{align*}
Here $\omega$ is the usual operation on symmetric polynomials that
transposes Schur polynomials, extended $\ZZ[t]$-linearly
\cite[I.2.7]{macdonald}.  On symmetric functions in infinitely many
variables, $\omega$ is an involution; in our setting, it is an
involution as long as no part of a partition exceeds $r$.

If $\mathcal{E}$ is a vector bundle on $G(r,n)$, a Schur polynomial
$s_\lambda$ of the Chern roots of $\mathcal{E}$ localizes at a fixed
point to the sum of the characters by which $T$ acts on the tangent
space of $\mathbf S^\lambda(\mathcal{E}^\vee)$, where $\mathbf
S^\lambda$ is a Schur functor.  For the vector bundles
$\mathcal{S}^\vee$ and $\mathcal{Q}$, the resulting localizations are
\begin{align*}
s_\lambda(\mathcal{S}^\vee)|_{x_B} &= s_\lambda(-t_i : i\in B), \\
s_\lambda(\mathcal{Q})|_{x_B} &= s_\lambda(t_j : j\not\in B).
\end{align*}

\subsection{Statement of the formula}
We will use one piece of notation to succincly state our theorem. The
partition $(n-r-1)^{r-1}$ is the $(r-1)\times(n-r-1)$ rectangle, and
if $\lambda$ and $\mu$ are two partitions, then the Littlewood--Richardson
coefficient $c_{\lambda\mu}^{(n-r-1)^{r-1}}$ equals $1$ or $0$,
according to whether or not $\lambda$ is the $180^\circ$ rotated complement
of $\mu$ within this rectangle. Given a partition $\lambda$
fitting inside a $(r-1) \times (n-r-1)$ box, we let $\tilde\lambda$
denote the unique partition $\mu$ satisfying
$c^{(n-r-1)^{r-1}}_{\lambda\mu} =1$.

\begin{theorem}\label{thm:equivariant uniform class}
  Given $v \in \AA^{r \times n}$ whose matroid is uniform of rank $r$,
  the class of $Y_{\pi(v)}$ in $A^*_T(G(r,n))$ is
  \begin{equation}\label{eq:euc0}
    [Y_{\pi(v)}]_T = \sum_{\lambda \subset (n-r-1)^{r-1}}
    s_\lambda(\mathcal{S}^\vee) s_{\tilde\lambda}(\mathcal Q).
  \end{equation}
  The class of $X_v$ in $A^*_G(\AA^{r\times n})$ is
  \begin{align*}
    [X_v]_G &= \sum_{\substack{\lambda \subset (n-r-1)^{r-1}\\\mu,\nu}} c^{\tilde\lambda}_{\mu\nu} s_\lambda(u) s_{\mu'}(t) s_\nu(u),
    \\&= \omega(s_{(r-1)^{n-r-1}}(u,u,t)).
  \end{align*}
\end{theorem}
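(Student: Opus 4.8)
The plan is to establish the two displayed formulas in turn: first the class $[Y_{\pi(v)}]_T \in A^*_T(G(r,n))$ by equivariant localization, and then the class $[X_v]_G$ by passing the first formula through the splitting $s$ of Theorem~\ref{thm:splitting}. Throughout I would use that the codimension of $Y_{\pi(v)}$ in $G(r,n)$ equals $(r-1)(n-r-1)$, the area of the rectangle $(n-r-1)^{r-1}$, which is also the common degree of every summand in \eqref{eq:euc0}.

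For the first formula I would argue that, since $\iota^*$ is injective, it suffices to check that the two sides of \eqref{eq:euc0} restrict to the same element of $\ZZ[t_1,\dots,t_n]$ at each fixed point $x_B$, and since the uniform matroid is $S_n$-invariant it is enough to treat one $B$. On the right-hand side I would substitute the given localizations $s_\lambda(\mathcal{S}^\vee)|_{x_B} = s_\lambda(-t_i : i\in B)$ and $s_{\tilde\lambda}(\mathcal Q)|_{x_B} = s_{\tilde\lambda}(t_j : j\notin B)$ and then invoke the branching (coproduct) identity for a rectangular shape,
\[
s_{(n-r-1)^{r-1}}(x;y) = \sum_{\lambda \subset (n-r-1)^{r-1}} s_\lambda(x)\, s_{\tilde\lambda}(y),
\]
which holds because the skew shape $(n-r-1)^{r-1}/\lambda$ is the $180^\circ$ rotation of $\tilde\lambda$. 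This collapses the right-hand restriction to the single Schur polynomial $s_{(n-r-1)^{r-1}}$ evaluated on the $n$-element alphabet $\{-t_i : i\in B\} \cup \{t_j : j\notin B\}$.

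For the left-hand side I would compute $[Y_{\pi(v)}]_T|_{x_B}$ as the $T$-multidegree of $Y_{\pi(v)}$ inside the standard affine chart $U_B \cong \operatorname{Hom}(\CC^B, \CC^{[n]\setminus B})$, on which the $(i,j)$ matrix entry carries weight $t_j - t_i$. Writing $\pi(v)$ in these coordinates as a matrix $a^0 = (a^0_{ij})$ with all entries nonzero (uniformity), the diagonal rescaling $a_{ij} \mapsto a_{ij}/a^0_{ij}$ is a weight-preserving linear isomorphism, and the orbit $\{(s_j/s_i)\}_{s \in T}$ closes up to exactly the variety of rank-at-most-one matrices, consistently with $\dim Y_{\pi(v)} = n-1$. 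By the Giambelli--Thom--Porteous formula the multidegree of this determinantal variety is $s_{(n-r-1)^{r-1}}$ of the Chern roots $\{t_j : j\notin B\} \cup \{-t_i : i\in B\}$, matching the right-hand side. I expect the main obstacle here to be the careful identification of the local model as the \emph{reduced} rank-one determinantal scheme, so that its class carries multiplicity one, together with the correct bookkeeping of tangent weights through Thom--Porteous.

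For the second formula I would first note that, since $v$ has rank $r$, we have $X_v = \overline{\pi^{-1}(Y_{\pi(v)})}$, so Theorem~\ref{thm:splitting}(i) gives $[X_v]_G = s([Y_{\pi(v)}]_T)$. Rather than apply the (merely $\ZZ[t_1,\dots,t_n]$-linear) map $s$ to a product, I would verify directly that the candidate $\omega(s_{(r-1)^{n-r-1}}(u,u,t))$ both maps to $[Y_{\pi(v)}]_T$ under the quotient $\pi^*$ and has a Schur-in-$u$ expansion using only $s_\lambda(u)$ with $\lambda_1 \le n-r$; by the uniqueness of the splitting this forces it to equal $s([Y_{\pi(v)}]_T) = [X_v]_G$. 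Expanding via $s_\nu(\mathcal{S}^\vee) = s_\nu(u)$ and $s_\nu(\mathcal Q) = \omega(s_\nu(u,t)) = \sum_{\lambda,\mu} c^\nu_{\lambda\mu}\, s_\lambda(t)\, s_{\mu'}(u)$ produces the stated triple sum, and re-applying the rectangular branching identity in the combined alphabet—using that $(r-1)^{n-r-1}$ is the transpose of $(n-r-1)^{r-1}$ and that $\omega$ transposes the $u$-alphabet—collapses it to the compact form. This last step is pure symmetric-function bookkeeping once the branching identity and the conventions for $\omega$ are set up.
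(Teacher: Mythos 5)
Your route to \eqref{eq:euc0} is genuinely different from the paper's (which computes $[Y_{\pi(v)}]_T|_{x_B}$ from the toric structure of $Y_{\pi(v)}$ via Brion's theorem on the tangent cone of the matroid polytope, a telescoping sum over permutations, and a Cauchy identity), and your identification of $Y_{\pi(v)}\cap U_B$, after the weight-preserving rescaling by $a^0_{ij}$, with the reduced rank-at-most-one locus in $\operatorname{Hom}(\CC^B,\CC^{[n]\setminus B})$ is correct and attractive. The step that fails is the evaluation of its multidegree. Giambelli--Thom--Porteous gives the class of the rank $\le 1$ locus as $\det\bigl(c_{n-r-1+j-i}(F-E)\bigr)_{1\le i,j\le r-1}$ where $c(F-E)=\prod_{j\notin B}(1+t_j)\big/\prod_{i\in B}(1+t_i)$, i.e.\ a \emph{supersymmetric} Schur function of the formal difference of alphabets. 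That is not the ordinary Schur polynomial $s_{(n-r-1)^{r-1}}$ evaluated on the union alphabet $\{t_j : j\notin B\}\cup\{-t_i : i\in B\}$, whose generating series would instead be $\prod_j(1+t_j)\prod_i(1-t_i)$: substituting negated variables into a polynomial is not the $\lambda$-ring operation of subtracting an alphabet. Concretely, for $r=2$, $n=5$, $B=\{1,2\}$, the rank $\le 1$ locus of a $2\times 3$ matrix has multidegree $e_2(t_3,t_4,t_5)-e_1(t_1,t_2)\,e_1(t_3,t_4,t_5)+h_2(t_1,t_2)$ (check by a Gr\"obner degeneration of the three $2\times2$ minors, or by noting its ordinary degree must be $\deg(\mathbf{P}^1\times\mathbf{P}^2)=3$), whereas your claimed value $s_{(2)}(-t_1,-t_2,t_3,t_4,t_5)=h_2(t_3,t_4,t_5)-e_1e_1+h_2(t_1,t_2)$ specializes to degree $h_2(1,1,1)=6$. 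So the two localizations you compute are genuinely different polynomials and the comparison at $x_B$ does not close.

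When the determinantal class is computed correctly it expands, by the dual form of the branching rule you quote, as $\sum_{\lambda}s_\lambda(-t_i:i\in B)\,s_{\tilde\lambda'}(t_j:j\notin B)$, with a transpose on the second factor; this agrees with the intermediate Cauchy-identity step in the paper's own argument (which produces $s_{\mu'}(t_j:j\notin B)$ with $\mu=\tilde\lambda$), so before repairing the proof you should resolve this transpose against the displayed statement rather than force the two sides to match. Two smaller points: your reduction to a single $B$ by $S_n$-symmetry of the uniform matroid is fine, but in the derivation of $[X_v]_G$ the claim that the candidate has a $u$-Schur expansion supported on $\lambda_1\le n-r$ is not mere bookkeeping --- individual products $s_\lambda(u)s_\nu(u)$ in the triple sum can contain $s_\kappa(u)$ with $\kappa_1$ as large as $2(n-r-1)>n-r$, so the bound holds only after cancellation and requires an argument before the uniqueness clause of Theorem~\ref{thm:splitting} can be invoked.
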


\subsection{Proof of the formula}
The first step is to understand the class of $Y_{\pi(v)}$ localized
at a $T$-fixed point.

\begin{lemma}\label{lem:cohom class of T-orbit}
  The $T$-equivariant cohomology class of $Y_{\pi(v)}$ localized at
  $x_B$ is
  \begin{equation}\label{eq:uec1}
    [Y_{\pi(v)}]_T|_{x_B} = \prod_{i \in B, j \notin B} (t_j -t_i) \sum_{(i_1,\ldots,i_n)}
    \frac{1}{(t_{i_2}-t_{i_1})(t_{i_3}-t_{i_2})\cdots
      (t_{i_n}-t_{i_{n-1}})}, 
  \end{equation}
  where the sums range over permutations $(i_1,\ldots,i_n) \in S_n$
  whose lex-first basis is~$B$.
\end{lemma}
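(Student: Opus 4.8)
I would compute the localization $[Y_{\pi(v)}]_T|_{x_B}$ directly from the definition of the restriction map via equivariant localization on the torus orbit closure. The torus $T$ acts on $Y_{\pi(v)}$, and this variety is itself a (projective) toric variety whose fan and $T$-fixed points can be read off from the matroid of $v$. Since the matroid is uniform of rank $r$, every size-$r$ subset $B$ is a basis, so $Y_{\pi(v)}$ contains every coordinate fixed point $x_B$ of $G(r,n)$. The key tool is the localization formula expressing the restriction of a class at a fixed point in terms of the local structure of the variety there.

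**Key steps.**
First I would identify the $T$-fixed points of $Y_{\pi(v)}$ and the tangent data at each. At $x_B$, the tangent space to $G(r,n)$ decomposes into weight spaces with characters $t_j - t_i$ for $i \in B$, $j \notin B$; the product $\prod_{i \in B, j \notin B}(t_j - t_i)$ is the equivariant Euler class of the tangent space, which explains the leading factor in~\eqref{eq:uec1}. Second, I would account for the local geometry of $Y_{\pi(v)}$ inside $G(r,n)$ at $x_B$. The remaining sum over permutations encodes the contributions of the maximal cones of the toric variety $Y_{\pi(v)}$ meeting $x_B$: each permutation $(i_1,\ldots,i_n)$ corresponds to a chain in the coordinate order, and the associated tangent weights at that cone are the consecutive differences $t_{i_{k+1}} - t_{i_k}$. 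The condition that the lex-first basis of the permutation be $B$ selects exactly those maximal cones (equivalently, the chambers of the relevant hyperplane arrangement) whose closure contains $x_B$. Third, I would apply the toric localization formula—summing $1$ over the equivariant Euler classes of the tangent spaces at the fixed points of the normalization, indexed by these permutations—to obtain the reciprocal product $\frac{1}{(t_{i_2}-t_{i_1})\cdots(t_{i_n}-t_{i_{n-1}})}$.

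**Main obstacle.**
The hard part will be making precise the correspondence between the maximal cones of the toric variety $Y_{\pi(v)}$ that are incident to $x_B$ and the permutations whose lex-first basis is $B$, and verifying that the tangent weights along each such cone are exactly the consecutive differences appearing in the denominator. Because $Y_{\pi(v)}$ need not be smooth at $x_B$, one must work with the normalization (or a toric resolution) and confirm that the localization contributions assemble correctly into~\eqref{eq:uec1}; the combinatorics of which chambers of the permutohedral-type fan contain a given vertex is where the matroid hypothesis (uniformity) is genuinely used, ensuring that the full symmetric structure over all permutations appears.
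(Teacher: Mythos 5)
Your overall strategy is viable and genuinely different in flavor from the paper's: you propose to stay in equivariant Chow theory, writing $[Y_{\pi(v)}]_T|_{x_B}$ as the Euler class $\prod_{i\in B,\,j\notin B}(t_j-t_i)$ of $T_{x_B}G(r,n)$ times the equivariant multiplicity of $Y_{\pi(v)}$ at $x_B$, and computing that multiplicity from a decomposition of the tangent cone of the matroid polytope $P(M(v))$ at the vertex $e_B$ into unimodular cones. The paper instead works in $K$-theory: it invokes White's theorem to identify $Y_{\pi(v)}$ with the normal toric variety of $P(M(v))$, writes $\K(Y_{\pi(v)})|_{x_B}$ as $\prod(1-t_j/t_i)$ times the Hilbert series of the semigroup algebra of the tangent cone at $e_B$, evaluates that Hilbert series by Brion's theorem applied to the triangulation into type~A Weyl chambers, and then converts to the Chow class by the Knutson--Miller--Shimozono recipe ($t_i\mapsto 1-t_i$, lowest-degree terms). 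Your route would avoid both the $K$-to-Chow conversion and the appeal to normality (the Chow class of a cycle is insensitive to normalization), which would be a real simplification if completed.

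It is not completed, however, and the gap is exactly where you place your ``main obstacle'': the entire combinatorial content of the lemma is the assertion that the tangent cone of $P(M(v))$ at $e_B$ decomposes, in the sense required for additivity of equivariant multiplicities (equivalently, for Brion's theorem), into the unimodular cones $\operatorname{cone}(e_{i_2}-e_{i_1},\ldots,e_{i_n}-e_{i_{n-1}})$ indexed by permutations whose lex-first basis is $B$. You state this correspondence as the thing to be verified and give no argument for it, so the claimed identity is never derived. Two further points. First, these permutations do not index fixed points of the \emph{normalization} of $Y_{\pi(v)}$ --- by White's theorem the normalization map is an isomorphism, and in any case only one point of it lies over $x_B$; they index maximal cones of a subdivision of the local cone, i.e.\ fixed points of a toric \emph{resolution} or terms in Brion's decomposition, and conflating the two leaves your ``toric localization formula'' without a precise referent. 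Second, the lemma is stated for an arbitrary rank~$r$ matrix $v$ (``lex-first basis'' is the greedy basis of the permutation with respect to the matroid $M(v)$), whereas you assume uniformity from the outset; uniformity is not what produces the permutation indexing, and the non-uniform case is precisely what the remark following the lemma addresses.
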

Note that the sum occurring in Lemma~\ref{lem:cohom class of T-orbit} 
is zero if $B$ is not a basis of $M(v)$.  When $M(v)$ is uniform, 
the sum ranges over those permutations that have the elements of
$B$ in their first $r$ positions.

\begin{proof} 
  Following the approach of \cite{finkspeyer}, we first identify
  $Y_{\pi(v)}$ as a toric variety.  Viewing toric varieties as images
  of monomial maps \cite[Chapters~7, 10]{miller-sturmfels}, the
  normalization of $Y_{\pi(v)}$ is the toric variety of the polytope
  given as the convex hull of the characters corresponding to the
  $T$-fixed points it contains.  By a result of White
  \cite[Theorem~2]{white-normality}, the variety $Y_{\pi(v)}$ is
  already normal, and therefore \emph{is} the toric variety just
  stated.  The $T$-fixed points in $Y_{\pi(v)}$ are those $x_B$ such
  that $B$ is a basis of the matroid $M(v)$.  The corresponding
  characters are $\{t^B : \textup{$B$ is a basis of }M(v)\}$, whose
  convex hull is the matroid base polytope $P(M(v))$ of $M(v)$,
  defined in Section~\ref{sec:matrix orbit closures}.

  If the toric variety $Y_{\pi(v)}$ contains the fixed point $x_B$,
  then its restriction to the $T$-invariant translate of the big
  Schubert cell around $x_B$ is the corresponding affine patch of
  $Y_{\pi(v)}$, in Fulton's construction: that is, it is the affine
  toric subvariety consisting of the orbits whose closures contain
  $x_B$.  Explicitly, this affine scheme is
  $\operatorname{Spec}\CC[C]$, where $C$ is the semigroup of lattice
  points in the tangent cone to $P(M(v))$ at the vertex $e_B$.  The
  $T$-equivariant $K$-theory class of $[Y_{\pi(v)}]_T|_{x_B}$ is then
  the product of $\Hilb(\CC[C])$ with $\prod_{i\in B,j\not\in B}
  (1-t_j/t_i)$. The Hilbert series $\Hilb(\CC[C])$ is the
  finely-graded lattice point enumerator of $C$.  We claim
  \begin{align}
    \Hilb(\CC[C]) &= \sum_{(i_1,\ldots,i_n)} \Hilb
    \operatorname{cone}(e_{i_2}-e_{i_1},
    \ldots, e_{i_n}-e_{i_{n-1}})\notag\\
    &=\sum_{(i_1,\ldots,i_n)}
    \frac{1}{(1-t_{i_2}/t_{i_1})(1-t_{i_3}/t_{i_2})\cdots
      (1-t_{i_n}/t_{i_{n-1}})}\label{eq:7.2a}
   \end{align}
   where the sums range over permutations $(i_1,\ldots,i_n) \in S_n$
   whose lex-first basis is $B$. To see this, apply Brion's theorem to
   the triangulation of the dual of this cone into type A Weyl
   chambers. The cones in the triangulation are unimodular, and their
   lattice point generators are those given in the second line.

   Altogether,
   \[
   \K(Y_{\pi(v)})|_{x_B}
     = \prod_{i \in B, j \notin B} (1 - t_j/t_i) 
       \sum_{(i_1,\ldots,i_n)}
    \frac{1}{(1-t_{i_2}/t_{i_1})\cdots(1-t_{i_n}/t_{i_{n-1}})}
   \]
   Using Proposition~\ref{prop:kms}, this becomes the equation to be proved 
   upon replacing each $t_i$ with $1-t_i$, and then extracting
   the lowest degree term of the resulting power series.
   (Note that taking the lowest-degree term can be done one factor
   at a time.)
\end{proof}

\begin{proof}[Proof of Theorem~\ref{thm:equivariant uniform class}]
  The second equation of the theorem follows from the first,
  by Theorem~\ref{thm:splitting}.

  By equivariant localization, it is enough to show the claimed
  equality after restriction to each $x_B$, in $A_T^*(x_B) \cong
  \ZZ[t_1,\ldots,t_n]$. On one hand, the restriction of the right side
  of \eqref{eq:euc0} at $x_B$ is
  \[
  \sum_{\lambda} 
  s_\lambda(-t_i : i\in B)\, s_{\tilde \lambda}(t_j : j\not\in B).
  \]
  We massage the formula for $[Y_{\pi(v)}]_T|_{x_B}$ in
  Lemma~\ref{lem:cohom class of T-orbit}, and show that it equals the
  above polynomial.
  
  Let us temporarily write $f(i_1,\ldots,i_n)$ for
  $1/(t_{i_2}-t_{i_1})\cdots(t_{i_n}-t_{i_{n-1}})$.  We have
  \[
  \frac{f(i_1,\ldots,\widehat{i_s},\ldots,i_n)}{f(i_1,\ldots,i_n)} =
  \frac{t_{i_{s+1}}-t_{i_{s-1}}}{(t_{i_s}-t_{i_{s-1}})(t_{i_{s+1}}-t_{i_s})}
  = \frac1{t_{i_{s+1}}-t_{i_s}} - \frac1{t_{i_{s-1}}-t_{i_s}}
  \]
  and similar identities when $s=1$ or $s=n$ in which the right hand term 
  with an out-of-range index in it is deleted.  
  Thus, if $\ell$ is a list of indices, we have a telescoping sum
  \[
  \sum_{\substack{\text{$\ell'$ : $\ell$ is $\ell'$ with $i$ dropped}
      \\ \text{$i$ precedes $j$ in $\ell'$}}} f(\ell') =
  \frac{f(\ell)}{t_j-t_i}.
  \]
  Grouping the terms of the sum in \eqref{eq:uec1} by $i_r$ and
  repeatedly applying the above identity and its order-reversed
  counterpart, we get
  \begin{equation}\label{eq:uec2}
    [Y_{\pi(v)}]_T|_{x_B} = 
    \prod_{i\in B,j\not\in B}(t_j-t_i) \;\cdot\;
    \sum_{i_r\in B}
    \left(\prod_{i\in B\setminus i_r}\frac1{t_{i-r}-t_i}\right)
    \left(\prod_{j\notin B}\frac1{t_j-t_{i_r}}\right).
  \end{equation}

  We next invoke the following variant of the Cauchy identity:
  \begin{align*}
    \prod_{t\in T,v\in V} (t-v)
    &= \sum_{\nu,\mu} c_{\nu\mu}^{(|V|)^{|T|}}
    s_\nu(t\in T)\, s_{\mu'}(-v\in V)
  \end{align*}
  In our localized cohomology class, we combine the first
  and last products in~\eqref{eq:uec2} and apply the Cauchy identity with
  $(T,V)=(\{-t_i:i\in B\setminus i_r\},\{-t_j:j\notin B\})$, giving
  \[
  \sum_{i_r\in B} \left(\prod_{i\in B\setminus
      i_r}\frac1{t_{i_r}-t_i}\right) \!\!
  \left(\sum_{\nu,\mu} c_{\nu\mu}^{(n-r)^{r-1}}
    \frac{\det(-t_i^j)_{i\in B\setminus i_r}^{j=\nu_k+r-1-k}}
    {\det(-t_i^j)_{i\in B\setminus i_r}^{0\leq j<r-1}}
    \cdot s_{\mu'}(t_j : j\not\in B)\right)
  \]
  where the $s_\nu$ is written as a ratio of determinants.  Now we
  combine the remaining product in the above display into the
  Vandermonde determinant in the denominator. The sum over $i_r\in B$
  can then be read as an expansion along the last row of the
  determinantal formula for $s_\lambda(-t_i : i\in B)$, where $\lambda$
  is obtained from $\nu$ by decrementing every part if $\nu$ has $r-1$ parts;
  if $\nu$ has fewer parts then the terms in this determinantal expansion cancel.  
  For a given $\nu$ the only $\mu$ yielding a nonzero term
  is the one such that $c_{\nu,\mu}^{(n-r)^{r-1}}$ equals 1,
  i.e.\ so that $\nu$ and $\mu$ are complements in a $(r-1)\times(n-r)$ rectangle.
  In this event
  $\lambda$ and $\mu$ are complements in a $(r-1)\times(n-r-1)$
  rectangle, so our localized class is
  \[
  \sum_{\lambda \subset (n-r-1)^{r-1}}
  s_\lambda(-t_i : i\in B) s_{\tilde\lambda}(t_j : j\not\in B).
  \]
  This agrees with the localization of~\eqref{eq:euc0} and the theorem
  follows.
\end{proof}

\subsection{Comparison to a formula of Klyachko}
There is another formula, due to Klyachko \cite{klyachko}, for the
non-equivariant class of $Y_{\pi(v)}$ in $A^*(G(r,n))$ when $v$ has a
uniform rank $r$ matroid. 
\begin{theorem}[Klyachko]
  Let $v$ have a uniform, rank $r$ matroid. Let $\lambda \subset
  (n-r)^r$ be a partition of $n-1$. Then the coefficient of
  $[\Omega_\lambda]$ in $[Y_{\pi(v)}]$ is
  \[
  \sum_{i=1}^r (-1)^i \binom{n}{i} s_\lambda(1^{r-i}).
  \]
\end{theorem}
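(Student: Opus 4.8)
The plan is to deduce Klyachko's formula from Theorem~\ref{thm:equivariant uniform class} by passing from the $T$-equivariant to the ordinary Chow ring. Setting every $t_j = 0$ in the equivariant class \eqref{eq:euc0} specializes $[Y_{\pi(v)}]_T$ to the non-equivariant class $[Y_{\pi(v)}] \in A^*(G(r,n))$, while $s_\lambda(\mathcal{S}^\vee)$ and $s_{\tilde\lambda}(\mathcal{Q})$ become the corresponding Schur polynomials in the Chern roots of $\mathcal{S}^\vee$ and $\mathcal{Q}$. Under the standard identification of Schur polynomials of $\mathcal{Q}$ (respectively $\mathcal{S}^\vee$) with Schubert classes, formula \eqref{eq:euc0} becomes a finite sum of products of Schubert classes, and the coefficient of any single $[\Omega_\mu]$ in $[Y_{\pi(v)}]$ is therefore a sum of Littlewood--Richardson numbers. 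The goal is to show that, for $\mu$ the complement of $\lambda$ in the $r\times(n-r)$ rectangle, this sum equals the alternating binomial expression in the statement.

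To isolate that coefficient I would pair $[Y_{\pi(v)}]$ against the Poincar\'e-dual Schubert class and integrate over $G(r,n)$, so that the coefficient of $[\Omega_\lambda]$ is realized as a single intersection number $\int_{G(r,n)} [Y_{\pi(v)}]\cdot \sigma$ for the Schubert class $\sigma$ of complementary dimension. There are two natural ways to evaluate this number. The first is combinatorial: expand the product of the two Schur classes from \eqref{eq:euc0} with $\sigma$ by the Littlewood--Richardson rule and read off the coefficient of the point class. The second, better suited to the tools of this section, is equivariant localization together with the non-equivariant limit: using Lemma~\ref{lem:cohom class of T-orbit}, the tangent Euler factor $\prod_{i\in B,\,j\notin B}(t_j-t_i)$ appearing in $[Y_{\pi(v)}]_T|_{x_B}$ cancels the denominator of the localization formula, leaving a sum over bases $B$ of the permutation sum in \eqref{eq:uec1} weighted by the localization of $\sigma$ at $x_B$, after which one extracts the numerical (degree-zero in $t$) part.

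The main obstacle is the closing combinatorial identity: recognizing either evaluation as $\sum_{i=1}^{r}(-1)^i\binom{n}{i}\,s_\lambda(1^{r-i})$. The binomial coefficients $\binom{n}{i}$ and the one-variable specializations $s_\lambda(1^{r-i})$ point toward an inclusion--exclusion, or Koszul/Euler-characteristic, mechanism. Here I would exploit that $m\mapsto s_\lambda(1^m)$ is a polynomial of degree $|\lambda| = n-1$, so that the \emph{complete} alternating sum is an $n$-th finite difference of a polynomial of degree $n-1$ and vanishes,
\[
\sum_{i=0}^{n}(-1)^i\binom{n}{i}\,s_\lambda(1^{r-i}) = 0.
\]
The genuine content is then the truncation at $i=r$ in the theorem, which is subtler than the vanishing above, since the omitted terms $i>r$ involve $s_\lambda$ at negative arguments and do not simply drop out. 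I would control them using the vanishing of $s_\lambda(1^m)$ for $m\in\{0,1,\dots,\ell(\lambda)-1\}$ and the reflection identity $s_\lambda(1^{-m}) = (-1)^{|\lambda|}s_{\lambda'}(1^m)$, and reconcile the resulting expression with the Littlewood--Richardson/localization sum computed above. Proving that this sum really collapses to the stated truncated binomial sum is where I expect the real work to lie, and it is the crux of the argument.
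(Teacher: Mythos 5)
First, note that the paper itself contains no proof of this statement: it is quoted from Klyachko \cite{klyachko} solely for comparison with Theorem~\ref{thm:equivariant uniform class}, and the authors never derive either formula from the other. So your proposal must be judged on its own, and as written it is a research plan rather than a proof. The sound part is the setup: setting $t=0$ in \eqref{eq:euc0} and using $s_\lambda(\mathcal{S}^\vee)=s_{\lambda'}(\mathcal{Q})$ expresses the coefficient of a Schubert class $[\Omega_\mu]$ as a sum of Littlewood--Richardson coefficients $\sum_{\lambda\subset(n-r-1)^{r-1}} c^{\mu}_{\lambda'\,\tilde\lambda}$, and your finite-difference observation (that $m\mapsto s_\lambda(1^m)$ is a polynomial of degree $|\lambda|=n-1$, so the full alternating sum over $i=0,\dots,n$ vanishes) together with the reflection $s_\lambda(1^{-m})=(-1)^{|\lambda|}s_{\lambda'}(1^m)$ is a reasonable toolkit. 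But the identification of that Littlewood--Richardson (or localization) sum with the truncated alternating binomial expression is the \emph{entire} content of the theorem, and nothing in your write-up establishes it; you say yourself that this is ``where the real work lies.'' Until that identity is proved, there is no proof.

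Two further cautions before you invest in closing the gap. First, the indexing conventions do not match: Klyachko's $\lambda$ is a partition of $n-1=\dim Y_{\pi(v)}$, so $[\Omega_\lambda]$ is indexed by dimension, whereas the classes $s_\mu(\mathcal{Q})$ coming out of \eqref{eq:euc0} are Schubert classes indexed by codimension; you must complement within the $r\times(n-r)$ rectangle before comparing coefficients, and your proposal does not address this. Second, run the sanity check in $G(2,4)$: there \eqref{eq:euc0} gives $[Y_{\pi(v)}]=2[\Omega_{(1)}]$ (consistent with the hypersimplex $\Delta(2,4)$ having normalized volume $4$ and $\int_{G(2,4)}\sigma_1^4=2$), while the displayed sum for $\lambda=(2,1)$, namely $\sum_{i=1}^{2}(-1)^i\binom{4}{i}s_{(2,1)}(1^{2-i})$, evaluates to $0$ because both specializations of $s_{(2,1)}$ vanish; the two agree only if the sum begins at $i=0$. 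So before attempting the crux identity you need to pin down precisely which normalization of the statement you are proving, or you will be trying to prove something false.
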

Setting all the $t$ variables equal to zero in
Theorem~\ref{thm:equivariant uniform class} we obtain a different
looking formula for the $\GL_r(\CC)$-equivariant Chow class of $X_v$:
\[
[X_v]_{\GL_r(\CC)} = 
\sum_{\lambda \subset (n-r-1)^{(r-1)}} s_\lambda(u) s_{\tilde\lambda}(u).
\]
As a consequence of this:
\begin{corollary}
  Let $v \in \AA^{r \times n}$ have a uniform rank $r$ matroid. The
  degree of the variety $X_v$ is
  \[
  \sum_{\lambda \subset (n-r-1)^{r-1}} 
    s_\lambda(1^r) s_{\tilde\lambda}(1^r ).
  \]
\end{corollary}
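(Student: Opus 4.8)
The plan is to read off the projective degree of the affine cone $X_v$ from its $\GL_r(\CC)$-equivariant Chow class by restricting the action to the central scalar torus. The starting point is the formula displayed just above, namely
$[X_v]_{\GL_r(\CC)} = \sum_{\lambda\subset(n-r-1)^{r-1}} s_\lambda(u)\,s_{\tilde\lambda}(u)$,
obtained from Theorem~\ref{thm:equivariant uniform class} by setting every $t_j=0$. Since $\tilde\lambda$ is the complement of $\lambda$ in the $(r-1)\times(n-r-1)$ rectangle, each summand is homogeneous of the same degree $c := |\lambda|+|\tilde\lambda| = (r-1)(n-r-1)$, so $[X_v]_{\GL_r(\CC)}$ is homogeneous of degree $c = \operatorname{codim} X_v$.

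First I would restrict the $\GL_r(\CC)$-action to the central one-parameter subgroup $\CC^\times \hookrightarrow \GL_r(\CC)$, $h\mapsto h\,\mathrm{Id}_r$, which acts on $\AA^{r\times n}$ by scaling the entire matrix. By functoriality of equivariant Chow groups this sends $[X_v]_{\GL_r(\CC)}$ to $[X_v]_{\CC^\times}$, and on Chow rings of a point it is the map $\ZZ[u_1,\dots,u_r]^{S_r}\to A^*_{\CC^\times}(\mathrm{pt})=\ZZ[h]$ sending every Chern root $u_i$ to $h$, because the standard representation $\CC^r$ restricts to $r$ copies of the scaling character. Using the homogeneity $s_\mu(h,\dots,h)=h^{|\mu|}s_\mu(1^r)$, the class $[X_v]_{\CC^\times}$ becomes $h^{c}\sum_{\lambda} s_\lambda(1^r)\,s_{\tilde\lambda}(1^r)$.

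The second ingredient identifies this scalar-equivariant class with the degree. Because the scalars lie in $\GL_r(\CC)$, the variety $X_v$ is invariant under this scaling and is therefore an affine cone; for the scaling action one has $A^*_{\CC^\times}(\AA^{r\times n})=\ZZ[h]$ with $[X_v]_{\CC^\times}=\deg(X_v)\cdot h^{c}$, where $\deg(X_v)$ denotes the degree of the projectivization $\mathbf{P}(X_v)\subset\mathbf{P}^{rn-1}$. This is the standard statement that the scaling-equivariant class of a cone of codimension $c$ and degree $d$ is $d\,h^c$ (for instance a degree-$d$ hypersurface, being $V(f)$ for $f$ homogeneous of degree $d$, has equivariant class $d\,h$). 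Comparing the two expressions for $[X_v]_{\CC^\times}$ and cancelling $h^c$ yields $\deg(X_v)=\sum_{\lambda\subset(n-r-1)^{r-1}} s_\lambda(1^r)\,s_{\tilde\lambda}(1^r)$; equivalently, the degree is obtained from the homogeneous class $[X_v]_{\GL_r(\CC)}$ by setting every $u_i=1$.

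The step I expect to require the most care is the passage from the $\CC^\times$-equivariant class to the degree: one must confirm that $X_v$ really is a cone for the chosen scaling (it is, since the central scalars of $\GL_r(\CC)$ scale the matrix), that the intended notion of ``degree of $X_v$'' is the projective degree of $\mathbf{P}(X_v)$, and that the normalization $[X_v]_{\CC^\times}=\deg(X_v)\,h^c$ holds with no stray multiplicity or sign. By contrast the symmetric-function manipulations, namely $s_\mu(h,\dots,h)=h^{|\mu|}s_\mu(1^r)$ and the evaluation $c=(r-1)(n-r-1)$, are routine.
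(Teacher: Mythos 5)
Your argument is correct and is precisely the standard specialization the paper leaves implicit: the corollary is stated as an immediate consequence of the formula for $[X_v]_{\GL_r(\CC)}$, the intended reasoning being that for a cone of codimension $c=(r-1)(n-r-1)$ the scaling-equivariant class (equivalently, the multidegree with all variables set equal) is $\deg(X_v)\cdot h^c$, so the degree is read off by putting $u_i=1$. Your checks that $X_v$ is a cone under the central scalars and that the normalization carries no extra factor are exactly the right points to verify, and they go through.
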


\bibliography{cc}{} \bibliographystyle{alpha}

\end{document}